\numberwithin{equation}{section}
\numberwithin{table}{section}
 \let\oldequation\equation
 \let\oldendequation\endequation
 \renewenvironment{equation}{\linenomathNonumbers\oldequation}{\oldendequation\endlinenomath}
\numberwithin{equation}{section}
\numberwithin{example}{section}
\numberwithin{remark}{section}
\numberwithin{lemma}{section}
\numberwithin{definition}{section}
\numberwithin{algorithm}{section}
\numberwithin{theorem}{section}
\numberwithin{figure}{section}
\numberwithin{table}{section}
\numberwithin{corollary}{section}
\newcommand{\be}{\begin{equation}}
\newcommand{\ee}{\end{equation}}
\journalname{xxx}
\begin{document}

\title{Fast adaptive tubal rank-revealing algorithm for  t-product based tensor approximation
}


\author{Qiaohua Liu \and  Jiehui Gu
}


\institute{ Qiaohua Liu,  Jiehui Gu  \at Department of Mathematics, Shanghai University and Newtouch Center for Mathematics,  Shanghai 200444, People's Republic of China. Q. Liu is   supported by the National Natural Science Foundation of China under grant 12471355; the Natural Science Foundation of Shanghai under grant 23ZR1422400. \emph{Email address:} qhliu@shu.edu.cn(Q. Liu). }

\date{Received: date / Accepted: date}

\maketitle

\begin{abstract} Color images and video sequences can be modeled as three-way tensors, which admit low tubal-rank approximations via convex surrogate minimization. This optimization problem is efficiently addressed by tensor singular value thresholding (t-SVT). To mitigate the computational burden of tensor singular value decomposition (t-SVD) in each iteration, this paper introduces an adaptive randomized algorithm for tubal rank revelation in data tensors \(\mathcal{A}\). Our method selectively captures the principal information from frontal slices in the Fourier domain using a predefined threshold, obviating the need for   priori tubal-rank and Fourier-domain singular values estimations  while providing an explicit tensor approximation. Leveraging optimality results from matrix randomized SVD, we establish theoretical guarantees demonstrating that the proposed algorithm computes low tubal-rank approximations within constants dependent on data dimensions and the Fourier-domain singular value gap. Empirical evaluations validate its efficacy in image processing and background modeling tasks.\\
\end{abstract}

\noindent {\bf Keywords}: tensor-tensor product; randomized t-SVD; low-tubal rank approximation;   tensor robust principal component analysis;    image processing; background modeling

\section{ { {Introduction}}} 
\label{intro-sec}

The problem of exploiting low-dimensional structure in high-dimensional data is taking on increasing importance in diverse fields of applications, including data analysis, image/video and signal processing,   and so on.  These   can be regarded as matrix approximation   or tensor approximation problems.
Traditional matrix-based data analysis using  singular value decomposition (SVD) \cite{gv2} is inherently two-dimensional. It provides an optimal rank-$k$ approximation in the matrix spectral norm and Frobenius norm, but  limits its usefulness in extracting information from a multidimensional perspective.

 The models that based on tensor multilinear data analysis have shown to be  capable of taking full advantage of the multilinear structures and providing better understanding and more precision.
The multilinear data analysis lies tensor decomposition, which
commonly takes three forms: CANDECOMP/PARAFAC (CP) decomposition \cite{hi}, Tucker decomposition \cite{tu}, tensor-SVD \cite{km}, tensor train decomposition \cite{cw3} and so on. They can be regarded as the generalization of matrix SVD to tensors.  These decompositions are now widely used in other application areas
such as computer vision \cite{gq,tm}, web data mining \cite{fss,szl} and so on.

However, not all of these tensor decompositions are   convenient   to handle the low-dimensional approximation problem.
 The CP rank, defined as the smallest number of rank one tensor decomposition, is generally NP-hard to compute.
The Tucker rank is a vector defined  by the rank of the unfolded matrices  of a tensor based on
matricization method, especially for a three-order tensor, the Tucker3 decomposition computes a core tensor and  factor matrices with orthonormal columns\raisebox{0.06mm}{------}a higher order SVD (HOSVD) \cite{lmv}.
  One can  specify the rank of the factor matrices and thus obtain a so-called best rank-($R_1, R_2, R_3$) Tucker3 decomposition.
However, unlike the matrix case, this best approximation cannot be obtained by truncating the full HOSVD. Besides, unfolding a $k$-way tensor
using matricization methods would generate $k$ matrices with the same number of entries as the tensor. Each unfolding might
destroy the original multi-way structure of the data and result in  vital information loss, and because of
the large number of entries of a tensor,  the computation would be inefficient  when recovering  each unfolded matrix and combining them into a tensor at each iteration.

The  tensor-SVD (t-SVD) introduced by Kilmer et al.  \cite{km} for 3-way tensors is based on the concept of tensor-tensor product (t-product) with a suitable algebraic structure.
 The advantage of such an approach over CP and Tucker decompositions is that its computation can be
 parallelizable and
  the resulting algebra and corresponding analysis enjoys
 many similar properties to matrix algebra and analysis. The tubal rank is defined via t-SVD and  the truncated t-SVD does give a compressed result that is theoretically optimal in the Frobenius and spectral norms  \cite{cw,km}. However,
for large-scale 3-way tensors,  the computation of a low tubal rank tensor approximation via full t-SVD might  suffer too much computational cost and storage requirement.
  In this paper  we propose an efficient randomized low tubal rank-revealing algorithm for the third-order high-dimensional  tensor, and provides its  low-tubal rank  approximation of the tensor with acceptable accuracy.

By trading  the optimality property of the matrix SVD with computational efficiency, the popular randomized schemes \cite{hmt,llj,lwm,ma,mrt,rlb,rxb,wlr,woo}   have shown their  effectiveness in low-rank matrix approximation problems.   Zhang et al. \cite{zsk} proposed randomized t-SVD, by implanting
  the idea of matrix randomized SVD \cite{hmt} to the t-product-based tensor approximation problems.   There are also many works that generate  tensor low-rank approximations that have similar rank-revealing properties to the randomized t-SVD, such as randomized tensor CX and CUR decompositions \cite{tm}, randomized tensor URV \cite{cw2}.

However,   these randomized rank-revealing methods for computing tensor low-rank representation always specify an estimation of the tubal rank beforehand.
Che et al. \cite{cw} proposed a randomized tensor singular value thresholding method \cite{cw}, by utilizing a threshold $\tau$
and a prior numerical tubal rank estimation  $k$ to compute a  low-rank approximation of  t-SVD. Song et al. \cite{sx} proposed low tubal rank tensor completion based on
singular value factors. Both methods first used randomized  sampling to
project  the high-dimensional $n_1\times n_2\times n_3$ tensor onto lower dimensional $n_1\times k\times n_3$ ($k\ll n_2$) tensor subspace with the estimated tubal rank, and then detected a compressed t-SVD  of the lower dimensional tensor  based on the given threshold  or a rank-decrease method based on spectral energy ratio or spectral gap \cite{lmw,wyz,zll}.
 If $k$ is underestimated, one might get a bad tensor approximation, and an overestimate of $k$ will increase the computational cost.
Recently  Ahmadi-Adl \cite{ad} proposed a randomized fixed-precision rank-revealing algorithm for approximating the t-SVD, by generalizing the work in \cite{mv,ygl} from matrices to tensors. The algorithm detects the Fourier-domain tensor range blocks by blocks and finds an approximation of the tensor, until the residual energy is less than the given precision in Frobenius norm. However the algorithm is restricted in the applications of  some convex tensor optimization problem in that the determination of the tubal rank depends on the
prior estimation of the fixed-precision.
 For example, in data denoising and completion problems,
   one needs to recover the low tubal rank  components  using tensor robust principal component analysis (\cite{lcl,lcl2}) and tensor completion (\cite{htz,sn}) algorithms. These algorithms require
     to derive low tubal rank approximation of 3-way tensor with a given singular value threshold in the iterations.

     The focus of this study lies in the low tubal-rank approximation of given tensors with respect to fixed threshold. The contribution made by this work can be summarized into three aspects.

     \begin{itemize}
\item For the randomized SVD of an $m\times n$  matrix $A$,  given a specified  target rank $k$, the computed orthogonal rank-$i$ ($1\le i\le k$)  approximate basis $\tilde  U_i$ of the range ${\rm Ran}(A)$ endows the projection  $\tilde U_i\tilde U_i^TA$ with a remarkable optimality property. 
              Inspired by this insight, we propose a randomized tubal rank-revealing algorithm for approximate tensors. Operating under a pre-specified tubal rank and fixed threshold, the algorithm computes the tensor's multirank and constructs an approximate tensor. We also provide theoretical analysis for the approximate tensor's properties and performance guarantees.

\item  Inspired by the adaptive block randomized rank-revealing algorithm \cite{liu} for matrices, we dispense with the pre-specified tubal rank guess and   propose an adaptive tubal rank-revealing algorithm for tensor approximation of \(\mathcal{A}\). We provide probabilistic error bounds for the error of the approximate tensor. The analysis of the results is not a simple conclusion drawn from \cite{liu}; instead, it breaks the computed rank assumption in \cite{liu}. The derived error results show that the proposed algorithm can compute a low tubal-rank approximation, with the approximation accuracy up to certain constants that depend on the data dimension and the Fourier-domain singular value gap from   optimal. If the Fourier-domain singular values exhibit a fast decay rate, the corresponding results will yield tighter error bounds.

\item The proposed algorithm does not produce an exact  tensor singular value thresholding (t-SVT) algorithm. But it can still give approximate solutions of tensor RPCA problem arising in color image compression and video processing problems. Experimental results show that the proposed algorithm is effective and behaves more efficient than the truncated t-SVD algorithms.

  \end{itemize}

Throughout this paper, we use the following notations. Given an $m\times n$  matrix $A$,    $A^T (A^H)$, Ran$(A)$ and $\sigma_j(A)$ denote the transpose (conjugate transpose), the range, and the   $j$-th largest singular value of $A$, respectively. Let the function $Q={\sf orth}(A)$ denote the orthonormalization of the columns of $A$, which can be achieved by
 calling a packaged QR factorization, say in Matlab, we call [$Q$, $\sim$ ]={\sf qr}($A$,0). If $Q$ and $Y$ have orthonormal columns,
 $Z={\sf orth2}(Y-Q(Q^TY))$ means twice-orthogonalization of  $Y$ against $Q$, i.e.,
\[
Y:=Y-Q(Q^TY),\quad Y:={\sf orth}(Y),\quad Z= Y-Q(Q^T Y).
\]
 For a Hermitian matrix $M$, the function $[V,D]={\sf eig_\downarrow}(M)$ means we compute the eigenpair of $M$ with the eigenvalues in $D$  arranged in descending order, and the columns of $V$ being corresponding  eigenvectors.

\section{Preliminaries}

In this section, we review some basic definitions, operations and well-known results of tensors.

\subsection{Basic definitions}

A tensor is a multidimensional array, and the order of the tensor
is the number of dimensions of this array. In this paper, we focus on the third-order tensor ${\cal A}\in {\mathbb R}^{n_1\times n_2\times n_3}$, whose entry is denoted by MATLAB indexing notation, i.e., ${\mathcal A}_{ijk}$.  The notations ${\cal A}(:,j,k)$, ${\cal A}(i,:,k)$ and ${\cal A}(i,j,:)$
are the ($j,k$)-th column fiber, the ($i,k$)-th row fiber and the ($i,j$)-th
tube fiber  of ${\cal A}$; ${\cal A}(i,:,:)$, ${\cal A}(:,j,:)$, ${\cal A}(:,:,k)$ are the $i$-th horizontal slice, $j$-th lateral slice and $k$-th frontal slice. For simplicity,
we denote ${\cal A}_{(i)}={\cal A}(i,:,:)$,  $\vec{{\cal A}}_j={\cal A}(:,j,:)$ and   ${\cal A}^{(k)}={\cal A}(:,:,k)$.

For a given real tensor ${\cal A}$, by using the MATLAB commands {\sf fft}  and {\sf ifft}, we denote by $\widehat {\cal A}$ the result of applying the Discrete Fourier Transform (DFT) on ${\cal A}$ along the third dimension, i.e., $\widehat {\cal A}={\sf fft}({\cal A},[~],3)$.  Analogously, one can also compute ${\cal A}$ from $\widehat {\cal A}$ via the inverse DFT, using
${\cal A}={\sf ifft}(\widehat {\cal A},[~],3)$. For simplicity, we denote $\widehat {\cal A}={\sf fft}_3({\cal A})$, and ${\cal A}={\sf ifft}_3(\widehat {\cal A})$.
We also  denote by $\widehat { A}\in {\mathbb C}^{n_1n_3\times n_2n_3}$ the block diagonal matrix corresponding to the frontal slice $\widehat{\cal A}^{(k)}$. That is,
\[
\widehat {\bf A}={\sf bdiag}(\widehat {\cal A}^{(1)},\ldots, \widehat {\cal A}^{(n_3)})=\left[\begin{array}{cccc}
\widehat {\cal A}^{(1)}&0_{n_1\times n_2}&\ldots& 0_{n_1\times n_2}\\
0_{n_1\times n_2}&\widehat {\cal A}^{(2)}&\ldots&0_{n_1\times n_2}\\
\vdots&\vdots&\ddots&\vdots\\
0_{n_1\times n_2}&0_{n_1\times n_2}&\ldots&\widehat {\cal A}^{(n_3)}
\end{array}\right].
\]

\begin{definition} [\cite{km}] (tensor product) Given two tensors ${\cal A}\in {\mathbb R}^{n_1\times n_2\times n_3}$ and
${\cal B}\in {\mathbb R}^{n_2\times n_4\times n_3}$, the tensor product (t-product) ${\cal A}*{\cal B}$ is the $n_1\times n_4\times n_3$ tensor, which is defined
as
\[{\cal A}*{\cal B}={\sf fold}({\rm circ}({\cal A})\cdot {\sf unfold}({\cal B})),\]
 where ${\sf fold}({\sf unfold}({\cal B}))={\cal B}$,
\[
{\sf circ}({\cal A})=\left[\begin{array}{cccc}
{\cal A}^{(1)}&{\cal A}^{(n_3)}&\ldots&{\cal A}^{(2)}\\
{\cal A}^{(2)}&{\cal A}^{(1)}&\ldots&{\cal A}^{(3)}\\
\vdots&\vdots&\ddots&\vdots\\
{\cal A}^{(n_3)}&{\cal A}^{(n_3-1)}&\ldots&{\cal A}^{(1)}
\end{array}\right],
\]
and
\[
{\sf unfold}({\cal B})=\left[{\cal B}^{(1)^T}~~ {\cal B}^{(2)^T}~ \ldots~ {\cal B}^{(n_3)^T}\right]^T.
\]

\end{definition}

The t-product is associative \cite{km}, i.e., ${\cal A}*({\cal B}*{\cal C})=({\cal A}*{\cal B})*{\cal C}$. 
 Moreover, the block circulant matrix ${\rm circ}({\cal A})$ can be diagonalized into block diagonal matrix $\widehat {\bf A}$ by the $n_3\times n_3$ unitary DFT matrix $ F_{n_3}$ as \cite{km}
\begin{equation}
\widehat  {\bf A}=(F_{n_3}\otimes I_{n_1}){\rm circ}({\cal A})(F_{n_3}^H\otimes I_{n_2}),\label{00}
\end{equation}
where  the unitary matrix  $F_{n_3}$ has the $(i,j)$-entry as $(F_{n_3})_{ij}=\omega^{(i-1)(j-1)}/\sqrt{n_3}$ with $\omega=\exp(-2\pi\sqrt{-1})$.
According to \cite{rr}, the block diagonal matrices in $\widehat  {\bf A}$ has some conjugate symmetry property that $\widehat {\cal A}(:,:,1)$ is real and  ${\rm conj}(\widehat {\cal A}(:,:,i))=\widehat {\cal A}(:,:,n_3-i+2)$ for $i=2,\ldots, n_3$. Conversely, if this conjugate symmetry property of complex $\widehat {\cal A}$ is preserved, then
the operation ${\cal A}={\sf ifft}_3({\cal \widehat A})$ guarantees that ${\cal A}$ is a real tensor.

\begin{definition}[\cite{km}] (conjugate transpose) The conjugate transpose of a tensor ${\cal A}\in {\mathbb R}^{n_1\times n_2\times n_3}$ is an $n_2\times n_1\times n_3$ tensor
${\cal A}^T$ obtained by conjugate transposing each of the frontal slices and then reversing the order of transposed frontal slices 2 through $n_3$.
\end{definition}


%

\begin{definition}[\cite{km}] (identity tensor and f-diagonal tensor) The identity tensor ${\cal I}_{nnn_3}\in {\mathbb R}^{n\times n\times n_3}$ is the tensor whose first frontal slice is the $n\times n$
identity matrix, and whose other frontal slices are all zeros.  A tensor is called  $f$-diagonal if each of its frontal slices is a diagonal matrix.
\end{definition}


\begin{definition}[\cite{km}] (orthogonal tensor) A tensor ${\cal Q}\in {\mathbb R}^{n\times n\times n_3}$ is orthogonal if it satisfies ${\cal Q}^T*{\cal Q}={\cal Q}*{\cal Q}^T={\cal I}_{nnn_3}$.
If  for $k\le n$ and ${\cal Q}\in {\mathbb R}^{n\times k\times n_3}$ satisfies ${\cal Q}^T*{\cal Q}={\cal I}_{kkn_3}$,
then ${\cal Q}$ is said to be partially orthogonal.
\end{definition}

\begin{definition}[\cite{km}]\label{def:2.2} (tensor spectral and Frobenius norms) The tensor spectral and Frobenius norms of ${\cal A}\in {\mathbb R}^{n_1\times n_2\times n_3}$, denoted as
$\|{\cal A}\|_{\xi}$ with $\xi=2, F$, are defined as
\[
\begin{array}{l}
\|{\cal A}\|_2:=\|\widehat  {\bf A}\|_2,\qquad
\|{\cal A}\|_F:=(\sum\limits_{i=1}^{n_3}\|{\cal A}^{(i)}\|_F^2)^{1/2}={1\over \sqrt{n_3}}\|\widehat {\bf A}\|_F.
\end{array}
\]
\end{definition}

\begin{definition}[\cite{km}] (tensor multi-rank and  tubal rank) The tensor
multi-rank  ${\rm rank}_m({\cal A})$ of ${\cal A}$ is the vector ${\rm rank}_m({\cal A})=[r_1, r_2,\ldots, r_{n_3}]^T\in {\mathbb R}^{n_3}$  with $r_i={\rm rank}(
\widehat {\cal A}^{(i)})$.
The tensor tubal rank, denoted by $\nu={\rm rank}_t({\cal A})$, is defined as the maximum entries of ${\rm rank}_m({\cal A})$, i.e., ${\rm rank}_t({\cal A})=\max_i\{r_i\}$.
\end{definition}

\subsection{Review of t-SVD and randomized SVD}


\begin{theorem}(t-SVD and and truncated t-SVD\cite{km}) Let ${\cal A}\in {\mathbb R}^{n_1\times n_2\times n_3}$. Then it can be factored as
\[
{\cal A}={\cal U}*{\cal S}*{\cal V}^T=\sum\limits_{i=1}^{\min\{n_1,n_2\}}\vec{\cal U}_i*{\bf s}_{i}^{\rm tub}*\vec{\cal V}_{i}^T,\quad {\bf s}_i^{\rm tub}:={\cal S}(i,i,:),
\]
where ${\cal U}\in {\mathbb R}^{n_1\times n_1\times n_3}$, ${\cal V}\in {\mathbb R}^{n_2\times n_2\times n_3}$ are orthogonal and 
the $f$-diagonal tensor ${\cal S}\in {\mathbb R}^{n_1\times n_2\times n_3}$ is called the singular value tensor, ${\bf s}_i^{\rm tub}$ is the $i$-th singular value tube fiber. If $k$ is a target truncation term, then the truncated t-SVD ({\sf t-tsvd}) of ${\cal A}$ is defined as
\[
{\cal A}_{k}={\cal U}_k*{\cal S}_k*{\cal V}_k^T,
\]
where ${\cal U}_k={\cal U}(:,1:k,:)\in {\mathbb R}^{n_1\times k\times n_3}$, ${\cal V}_k={\cal V}(:,1:k,:)\in {\mathbb R}^{n_2\times k\times n_3}$ are partially orthogonal, and ${\cal S}_k={\cal S}(1:k,1:k,:)\in{\mathbb R}^{k\times k\times n_3}$ is an f-diagonal tensor.
\end{theorem}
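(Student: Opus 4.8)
The plan is to lift the classical matrix SVD to the Fourier domain slice by slice and then transport the resulting factorization back through the inverse DFT. First I would apply the DFT along the third mode, i.e., pass to the block-diagonal matrix $\widehat{\bf A}={\sf bdiag}(\widehat{\cal A}^{(1)},\ldots,\widehat{\cal A}^{(n_3)})$, which by (\ref{00}) is unitarily similar to ${\rm circ}({\cal A})$. For each frontal slice I would take a full matrix SVD $\widehat{\cal A}^{(i)}=\widehat U^{(i)}\widehat S^{(i)}(\widehat V^{(i)})^H$ with $\widehat U^{(i)}\in{\mathbb C}^{n_1\times n_1}$, $\widehat V^{(i)}\in{\mathbb C}^{n_2\times n_2}$ unitary, being careful to make a \emph{coherent} choice across the conjugate-symmetric pairs: for $i=2,\ldots,n_3$ set the SVD of $\widehat{\cal A}^{(n_3-i+2)}$ to be the entrywise conjugate of that of $\widehat{\cal A}^{(i)}$, and take a real SVD for the real slice $i=1$ (and, when $n_3$ is even, for the middle real slice). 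Collecting these factors yields block-diagonal matrices $\widehat{\bf U},\widehat{\bf S},\widehat{\bf V}$ with $\widehat{\bf A}=\widehat{\bf U}\,\widehat{\bf S}\,\widehat{\bf V}^H$.

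Next I would define ${\cal U}$ as ${\sf ifft}_3$ of the tensor whose frontal slices are the $\widehat U^{(i)}$, and likewise ${\cal S}$ and ${\cal V}$. Because of the coherent choice above, each of these Fourier-domain tensors satisfies the conjugate-symmetry property recalled after (\ref{00}), so the inverse DFT returns \emph{real} tensors ${\cal U}\in{\mathbb R}^{n_1\times n_1\times n_3}$, ${\cal V}\in{\mathbb R}^{n_2\times n_2\times n_3}$ and ${\cal S}\in{\mathbb R}^{n_1\times n_2\times n_3}$. It then remains to check: (i) ${\cal A}={\cal U}*{\cal S}*{\cal V}^T$; (ii) ${\cal U},{\cal V}$ are orthogonal; (iii) ${\cal S}$ is $f$-diagonal. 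For (i) and (ii) I would use that, under (\ref{00}), the t-product of tensors corresponds to ordinary multiplication of their block-diagonal matrices, and tensor conjugate transposition corresponds to conjugate transposition of the block-diagonal matrix; hence $\widehat{\bf A}=\widehat{\bf U}\widehat{\bf S}\widehat{\bf V}^H$ and $\widehat{\bf U}^H\widehat{\bf U}=\widehat{\bf V}^H\widehat{\bf V}=I$ translate precisely into ${\cal U}^T*{\cal U}={\cal I}_{n_1n_1n_3}$, ${\cal V}^T*{\cal V}={\cal I}_{n_2n_2n_3}$ and their right counterparts. For (iii), note that the DFT acts only along tubes, so a tensor all of whose Fourier frontal slices are diagonal already has every off-diagonal tube identically zero; thus ${\cal S}$ is $f$-diagonal, with diagonal tubes ${\bf s}_i^{\rm tub}={\cal S}(i,i,:)$. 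The sum expansion then follows by writing the t-product in terms of lateral slices and tube fibers and invoking associativity of $*$.

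For the truncated t-SVD, I would set ${\cal U}_k={\cal U}(:,1:k,:)$, ${\cal V}_k={\cal V}(:,1:k,:)$, ${\cal S}_k={\cal S}(1:k,1:k,:)$ and observe that column/row truncation commutes with the block-diagonal/Fourier correspondence: $\widehat{\bf U}_k$ and $\widehat{\bf V}_k$ consist of the first $k$ columns of each Fourier block, so $\widehat{\bf U}_k^H\widehat{\bf U}_k=\widehat{\bf V}_k^H\widehat{\bf V}_k=I$ gives ${\cal U}_k^T*{\cal U}_k={\cal I}_{kkn_3}$, ${\cal V}_k^T*{\cal V}_k={\cal I}_{kkn_3}$, and ${\cal A}_k={\cal U}_k*{\cal S}_k*{\cal V}_k^T$ is exactly the tensor whose $i$-th Fourier slice is the rank-$k$ truncated matrix SVD of $\widehat{\cal A}^{(i)}$.

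The step I expect to be the main (if mostly bookkeeping) obstacle is arranging the per-slice SVDs so that the assembled Fourier-domain factors inherit the conjugate-symmetry property, since this is precisely what guarantees that ${\cal U},{\cal S},{\cal V}$ emerge as real tensors rather than complex ones; the remainder is a dictionary between the tensor operations and their block-diagonal matrix images under (\ref{00}).
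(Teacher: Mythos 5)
Your proposal is correct and is essentially the standard argument behind this cited result: slice-wise matrix SVDs in the Fourier domain assembled with the conjugate-symmetry pairing and transported back by ${\sf ifft}_3$, which is exactly the construction the paper itself relies on (see Algorithm \ref{alg:2.1}) and the proof given in the cited reference \cite{km}. The paper states the theorem without proof, so no divergence to report; your attention to choosing coherent (conjugate-paired, real on the self-conjugate slices) SVDs is precisely the point that guarantees realness of ${\cal U},{\cal S},{\cal V}$.
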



\begin{theorem}[\cite{cw,km}]\label{thm:2.1} Suppose that ${\cal A}_k$ is the {\sf t-tsvd} of ${\cal A}\in {\mathbb R}^{n_1\times n_2\times n_3}$ with a given target truncated term $k$. Let ${\mathbb S}=\{{\cal C}={\cal X}*{\cal Y}: {\cal X}\in {\mathbb R}^{n_1\times k\times n_3}, {\cal Y}\in {\mathbb R}^{k\times n_2\times n_3}\}$,
and $\widehat\sigma_j^{(i)}=\widehat {\cal S}(j,j,i)$ is the $j$-th singular value corresponding to the $i$-th frontal slice in the Fourier domain. Then
 ${\cal A}_{k}={\rm argmin}\|{\cal A}-\tilde {\cal A}\|_{\xi}$ for  all $\tilde {\cal A}\in {\mathbb S}$ and $\xi=2, F$.
  Therefore, $\|{\cal A}-{\cal A}_{k}\|_\xi$ is the theoretical minimal error, given by
\[
\|{\cal A}-{\cal A}_{k}\|_\xi=\left\{
\begin{array}{ll}
\max\limits_{i=1,2,\ldots, n_3}\widehat \sigma_{k+1}^{(i)},&\xi=2,\\
\left({1\over n_3}\sum\limits_{i=1}^{n_3}\sum\limits_{j=k+1}^{n_3}(\widehat \sigma_j^{(i)})^2\right)^{1/2},&\xi=F.
\end{array}\right.
\]
%
\end{theorem}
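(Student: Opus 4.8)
The plan is to push the whole problem through the block-diagonalization \eqref{00} into the Fourier domain, where the tensor minimization decouples into $n_3$ independent matrix problems, each settled by the classical Eckart--Young--Mirsky theorem, and then to reassemble the slice-wise optimizers into a tensor. First I would exploit that $F_{n_3}\otimes I$ is unitary, so that by Definition~\ref{def:2.2} both objectives separate over the frontal slices of the Fourier transform: for any $\tilde{\cal A}$, $\|{\cal A}-\tilde{\cal A}\|_2=\|\widehat{\bf A}-\widehat{\tilde{\bf A}}\|_2=\max_{1\le i\le n_3}\|\widehat{\cal A}^{(i)}-\widehat{\tilde{\cal A}}^{(i)}\|_2$ and $n_3\|{\cal A}-\tilde{\cal A}\|_F^2=\|\widehat{\bf A}-\widehat{\tilde{\bf A}}\|_F^2=\sum_{i=1}^{n_3}\|\widehat{\cal A}^{(i)}-\widehat{\tilde{\cal A}}^{(i)}\|_F^2$, because the spectral (resp. Frobenius) norm of a block-diagonal matrix is the maximum (resp. the $\ell_2$-aggregate) of the norms of its diagonal blocks.

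Next I would describe the feasible set $\mathbb{S}$ in the Fourier domain. Since \eqref{00} turns the t-product into frontal-slice-wise matrix multiplication, $\widehat{({\cal X}*{\cal Y})}^{(i)}=\widehat{\cal X}^{(i)}\widehat{\cal Y}^{(i)}$, every ${\cal C}\in\mathbb{S}$ obeys ${\rm rank}(\widehat{\cal C}^{(i)})\le k$ for all $i$. Conversely, any family $\{B^{(i)}\}_{i=1}^{n_3}$ with ${\rm rank}(B^{(i)})\le k$ and the conjugate symmetry ``$B^{(1)}$ real, ${\rm conj}(B^{(i)})=B^{(n_3-i+2)}$'' admits slice-wise factorizations $B^{(i)}=X^{(i)}Y^{(i)}$, with $X^{(i)}\in{\mathbb C}^{n_1\times k}$ and $Y^{(i)}\in{\mathbb C}^{k\times n_2}$, that can be chosen to respect the same symmetry; then the tensors ${\cal X},{\cal Y}$ determined by $\widehat{\cal X}^{(i)}=X^{(i)}$, $\widehat{\cal Y}^{(i)}=Y^{(i)}$ are real (by the converse conjugate-symmetry property recalled after \eqref{00}) and satisfy ${\cal X}*{\cal Y}\in\mathbb{S}$ with $\widehat{({\cal X}*{\cal Y})}^{(i)}=B^{(i)}$. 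Hence minimizing $\|{\cal A}-{\cal C}\|_\xi$ over $\mathbb{S}$ is exactly the problem of choosing, in each Fourier slice $i$, a rank-$\le k$ approximant $\widehat{\cal C}^{(i)}$ of $\widehat{\cal A}^{(i)}$, the only coupling being the conjugate symmetry between slices $i$ and $n_3-i+2$.

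Then I would apply Eckart--Young--Mirsky slice by slice: over all rank-$\le k$ matrices $M$, $\|\widehat{\cal A}^{(i)}-M\|_2\ge\widehat\sigma_{k+1}^{(i)}$ and $\|\widehat{\cal A}^{(i)}-M\|_F^2\ge\sum_{j>k}(\widehat\sigma_j^{(i)})^2$, both attained at the leading rank-$k$ part $[\widehat{\cal A}^{(i)}]_k$ of $\widehat{\cal A}^{(i)}=\widehat{\cal U}^{(i)}\widehat{\cal S}^{(i)}(\widehat{\cal V}^{(i)})^H$. Because ${\rm conj}(\widehat{\cal A}^{(i)})=\widehat{\cal A}^{(n_3-i+2)}$ and truncation commutes with complex conjugation, the family $\{[\widehat{\cal A}^{(i)}]_k\}_i$ is conjugate symmetric, and it is precisely the Fourier representation of ${\cal A}_k={\cal U}(:,1:k,:)*{\cal S}(1:k,1:k,:)*{\cal V}(:,1:k,:)^T$. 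So the single feasible point ${\cal A}_k$ attains every per-slice lower bound simultaneously, for both $\xi=2$ and $\xi=F$, hence it is a global minimizer over $\mathbb{S}$; substituting the optimal residuals into the separated norm formulas of the first step then yields the stated expressions for $\|{\cal A}-{\cal A}_k\|_2$ and $\|{\cal A}-{\cal A}_k\|_F$.

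The step I expect to be the main obstacle is the second one: verifying that the Fourier image of the \emph{real} set $\mathbb{S}$ is precisely the set of conjugate-symmetric tuples of rank-$\le k$ matrices --- so that passing to decoupled matrix problems loses no optimality --- and that the slice-wise truncated SVDs can be, and are, assembled into a genuine real tensor, namely ${\cal A}_k$. Once that equivalence is pinned down, the Eckart--Young--Mirsky input and the norm bookkeeping are entirely routine.
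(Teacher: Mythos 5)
Your proof is correct and is essentially the standard argument behind this result (which the paper states without proof, citing \cite{cw,km}): unitary invariance of the DFT reduces both norms to the block-diagonal Fourier slices, slice-wise Eckart--Young--Mirsky gives the per-slice lower bounds, and the truncated t-SVD ${\cal A}_k$ is feasible and attains all of them simultaneously for both $\xi=2$ and $\xi=F$. One remark: the step you flag as the main obstacle---showing that the Fourier image of $\mathbb{S}$ is \emph{exactly} the set of conjugate-symmetric families of rank-$\le k$ slices---is not actually needed; the argmin claim only requires the forward inclusion (every ${\cal C}\in\mathbb{S}$ has ${\rm rank}(\widehat{\cal C}^{(i)})\le k$, giving the lower bound) together with the observation that ${\cal A}_k={\cal U}_k*({\cal S}_k*{\cal V}_k^T)\in\mathbb{S}$ and its Fourier slices are the rank-$k$ truncations $[\widehat{\cal A}^{(i)}]_k$, so the surjectivity and symmetric-factorization construction can be dropped.
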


 Algorithm \ref{alg:2.1} computes a $k$-term truncated t-SVD  of an  ${n_1\times n_2\times n_3}$ tensor, and it costs  ${\cal O}(n_1n_2n_3k)$ flops. In the Fourier domain, the truncated t-SVD enjoys the advantage of parallel computation of the SVD of $\widehat {\cal A}^{(i)}$ in an independent manner.

\begin{algorithm}
\caption{$k$-term truncated t-SVD ({\sf t-tsvd})\cite{lcl}}\label{alg:2.1}

\begin{algorithmic}[1]
\REQUIRE A tensor ${\cal A}\in {\mathbb R}^{n_1\times n_2\times n_3}$ and a target truncated term $k$.

\ENSURE ${\cal U}_k\in {\mathbb R}^{n_1\times k\times n_3}$, ${\cal V}_k\in {\mathbb R}^{n_2\times k\times n_3}$, and ${\cal S}_k\in {\mathbb R}^{k\times k\times n_3}$.

\STATE Compute $\widehat {\cal A}={\sf fft}_3({\cal A})$.

\FOR  {$i=1,2,\ldots, \lceil {n_3+1\over 2}\rceil$}

\STATE Compute the SVD of $\widehat {\cal A}(:,:,i)$ as $\widehat {\cal A}(:,:,i)=USV^H$.

\STATE Form $U_k, V_k$ and $S_k$ by truncating $U, V$ and $S$ with the target  term $k$.

\STATE Set $\widehat {\cal U}_k(:,:,i)=U_k, \widehat {\cal V}_k(:,:,i)=V_k$ and $\widehat {\cal S}_k(:,:,i)=S_k$.

\ENDFOR

\FOR {$i=\lceil {n_3+1\over 2}\rceil+1,\ldots, n_3$}

\STATE Set $\widehat {\cal U}_k(:,:,i)={\rm conj}(\widehat {\cal U}_k(:,:,n_3-i+2))$, $\widehat {\cal V}_k(:,:,i)={\rm conj}(\widehat {\cal V}_k(:,:,n_3-i+2))$,

\qquad~~  and $\widehat S_k(:,:,i)=\widehat S_k(:,:,n_3-i+2)$.

\ENDFOR

\RETURN ${\cal U}_k={\sf ifft}_3(\widehat {\cal U}_k)$, ${\cal V}_k={\sf ifft}_3(\widehat {\cal V}_k)$ and ${\cal S}_k={\sf ifft}_3(\widehat S_k)$.
\end{algorithmic}
\end{algorithm}

\begin{algorithm}
\caption{(RSVD,\cite{hmt})  Basic randomized SVD algorithm with power scheme}  \label{rQB}
\begin{algorithmic}[1]
\REQUIRE $m\times n$ matrix $M$ with $m\ge n$, integer $k>0$, oversampling parameter $p$ and $\ell=k+p\ll n$ and power parameter $q$.

\ENSURE  A rank-$k$ approximation $\tilde M_k$.

\STATE Draw a random $n\times \ell$  Gaussian matrix $G$ via $G={\sf randn}(n,\ell)$.

\STATE Compute an orthogonal column basis  for $M\Omega$, we denote $Q={\sf orth}((MM^T)^qMG)$.

\STATE  Compute $B=Q^TM$.

\STATE Compute the SVD: $B=\tilde Z\tilde \Sigma\tilde  V^T$.

\STATE  Compute $\tilde U=Q\tilde Z$ and get $\tilde M_k=\tilde U_{k}\tilde\Sigma_{k}\tilde V_{k}^T$ from truncated SVD of $QB$.
\end{algorithmic}
\end{algorithm}

The randomized t-SVD algorithm proposed by Zhang et al. \cite{zsk}    is an extension of matrix randomized SVD (RSVD) \cite{hmt}.
With strong theoretical guarantee, the idea of RSVD is to trade accuracy with efficiency, by using randomization and sampling strategy to convert the low-rank approximation of large input matrix into the low rank approximation problem of a small matrix. The algorithm is described in Algorithm \ref{rQB}, where
the oversampling parameter $p$ and power parameter $q$  are used to enhance the quality of $Q$. In practice, $q\le 2$ is enough to achieve a good accuracy.
Let $\tilde U_i$  contain the first $i$ columns of $\tilde U$. The theorem below shows that $\tilde U_i\tilde U_i^TM$ gives the optimal rank-$i$ approximation to $M$ within the range of $\tilde U$.

\begin{theorem}\label{thm:2.2} Let $1\le k\le {\rm rank}(M)$. For the $m\times n$ real matrix $M$, consider its SVD:
\[
M=U\Sigma V^T=[U_k\quad U_{k,\bot}]\left[\begin{array}{cc}\Sigma_k&\\&\Sigma_{k,\bot}\end{array}\right]\left[\begin{array}c V_{k}^T\\ V_{k,\bot}^T\end{array}\right],
\]
where $\Sigma_k\in {\mathbb R}^{k\times k}$, $U_k, V_k$ contain the first $k$ columns of $U, V$, respectively.
Let $M_k=U_k\Sigma_kV_k^T$ be the best rank-$k$ approximation to $M$.
Assume the starting Gaussian matrix $G\in {\mathbb R}^{n\times \ell}$ with the oversampling columns $p=\ell-k\ge 2$, and   the notations $Q, \tilde Z$ and $\tilde U=Q\tilde Z$ are obtained from Algorithm \ref{rQB}.
If $\tilde U_i$ denotes  the first $i$($1\le i\le k$) columns of  $\tilde U$, then
$Q(Q^TM)_i$ is the best rank-$i$ approximation to $M$ within the range of $Q$ in the Frobenius norm, i.e.,
\begin{equation}
\|M-Q(Q^TM)_i\|_F^2=\min\limits_{{\rm rank}(Y)\le i} \|M-QY\|_F^2.\label{min}
\end{equation}
In addition,  $Q(Q^TM)_i=\tilde U_i\tilde U_i^TM$.   If $\gamma_i^{4q+2}\sum\limits_{j>k}\big({\sigma_j/ \sigma_{k+1}}\big)^2<{(p-1)/k}$, 
 then for $i=1,2,\ldots,k$,
\begin{eqnarray}
&{}&{\mathbb E}\|M-\tilde U_i\tilde U_i^TM\|_F^2\le  \big(\sum_{j>i}\sigma_j^2\big)+{k\over p-1}\gamma_i^{4q}\big(\sum\limits_{j>k}\sigma_j^2\big),\label{bnd1}\\
&{}&{\mathbb E}\|M-\tilde U_i\tilde U_i^TM\|_2^2\le \sigma_{i+1}^2+{k\over p-1}\gamma_i^{4q}\big(\sum\limits_{j>k}\sigma_j^2\big),\label{bnd2}\\
&{}&\sigma_i^2\Big[1-{k\over p-1}\gamma_i^{4q+2}\big(\sum\limits_{j>k}\big({\sigma_j\over \sigma_{k+1}}\big)^2\Big]\le {\mathbb E}\|\tilde u_i^TM\|_2^2\le \sigma_i^2,\label{bnd3}
\end{eqnarray}
where   $\gamma_i={\sigma_{k+1}/\sigma_i}$ with $\sigma_j$ being the $j$-th largest singular value of $M$,   $\tilde u_i$ is the $i$th column of $\tilde U_i$,
and ${\mathbb E}(\cdot)$ denotes the expectation of an event.
\end{theorem}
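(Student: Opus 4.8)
The plan is to work entirely inside the range $\mathcal Q:=\mathrm{Ran}(Q)$, reduce the statements to deterministic linear algebra there, and then average over the Gaussian test matrix $G$. For the first identity, split $M-QY=(I-QQ^T)M+Q(Q^TM-Y)$; the two summands have column spaces in $\mathcal Q^{\perp}$ and $\mathcal Q$, hence are Frobenius-orthogonal, so $\|M-QY\|_F^2=\|(I-QQ^T)M\|_F^2+\|Q^TM-Y\|_F^2$, and minimizing the last term over $\mathrm{rank}(Y)\le i$ is the Eckart--Young problem, solved by $Y=(Q^TM)_i$, which gives (\ref{min}). For $Q(Q^TM)_i=\tilde U_i\tilde U_i^TM$, substitute the SVD $B=Q^TM=\tilde Z\tilde\Sigma\tilde V^T$: then $(Q^TM)_i=\tilde Z_i\tilde\Sigma_i\tilde V_i^T$ and $\tilde U=Q\tilde Z$ give $Q(Q^TM)_i=\tilde U_i\tilde\Sigma_i\tilde V_i^T$, while $\tilde U_i^TM=\tilde Z_i^TB=\tilde\Sigma_i\tilde V_i^T$ (using $\tilde Z_i^T\tilde Z=[I_i\ 0]$), so $\tilde U_i\tilde U_i^TM=\tilde U_i\tilde\Sigma_i\tilde V_i^T$ as well; moreover $\tilde U^T\tilde U=\tilde Z^T\tilde Z=I$, so $\tilde u_i$ is a unit vector.

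Next I would set up the deterministic structure behind (\ref{bnd1})--(\ref{bnd3}). Writing $M=U\Sigma V^T$, put $\Omega_1:=V_k^TG$ and $\Omega_2:=V_{k,\bot}^TG$; these are independent standard Gaussian matrices and $\Omega_1\in\mathbb R^{k\times\ell}$ has full row rank almost surely. Since $(MM^T)^qM=U\Sigma^{2q+1}V^T$, the sampled matrix is $U_k\Sigma_k^{2q+1}\Omega_1+U_{k,\bot}\Sigma_{k,\bot}^{2q+1}\Omega_2$; right-multiplying by $\Omega_1^{\dagger}\Sigma_k^{-(2q+1)}$ and using $\Omega_1\Omega_1^{\dagger}=I_k$ shows that $T:=U_k+U_{k,\bot}H$, with $H:=\Sigma_{k,\bot}^{2q+1}\Omega_2\Omega_1^{\dagger}\Sigma_k^{-(2q+1)}$, has column space inside $\mathcal Q$ and rank $k$ (because $T^TT=I_k+H^TH$). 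Writing $T_i,H_i$ for the first $i$ columns, one records $T_i^TM=\Sigma_iV_i^T+H_i^T\Sigma_{k,\bot}V_{k,\bot}^T$ and $T_i^TT_i=I_i+H_i^TH_i$.

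For (\ref{bnd1}) I would use the rank-$\le i$ competitor $W:=T_i\Sigma_iV_i^T$, whose column space lies in $\mathcal Q$: by (\ref{min}), $\|M-\tilde U_i\tilde U_i^TM\|_F\le\|M-W\|_F$, and since $M-W=(M-M_i)-U_{k,\bot}H_i\Sigma_iV_i^T$ is a sum of two matrices with orthogonal row spaces, $\|M-W\|_F^2=\sum_{j>i}\sigma_j^2+\|H_i\Sigma_i\|_F^2$. An entrywise estimate, $\sigma_{k+a}^{2(2q+1)}\le\sigma_{k+1}^{4q}\sigma_{k+a}^2$ and $\sigma_b^{-4q}\le\sigma_i^{-4q}$ for $b\le i$, gives $\|H_i\Sigma_i\|_F^2\le\gamma_i^{4q}\|\Sigma_{k,\bot}\Omega_2\Omega_1^{\dagger}\|_F^2$; conditioning on $\Omega_1$ (independent of $\Omega_2$), using $\mathbb E\|C\Omega_2 D\|_F^2=\|C\|_F^2\|D\|_F^2$ for fixed $C,D$ and then $\mathbb E\|\Omega_1^{\dagger}\|_F^2=k/(p-1)$, yields $\mathbb E\|\Sigma_{k,\bot}\Omega_2\Omega_1^{\dagger}\|_F^2=\tfrac{k}{p-1}\sum_{j>k}\sigma_j^2$, and (\ref{bnd1}) follows. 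For (\ref{bnd3}), since $\tilde u_i^T(I-QQ^T)=0$ one has $\|\tilde u_i^TM\|_2=\|\tilde u_i^TQQ^TM\|_2=\sigma_i(QQ^TM)$, so the upper bound $\le\sigma_i$ is immediate; for the lower bound, $\mathrm{Ran}(T_i)\subseteq\mathcal Q$ gives $QQ^T\succeq T_i(T_i^TT_i)^{-1}T_i^T$, hence $\sigma_i(QQ^TM)\ge\sigma_i((T_i^TT_i)^{-1/2}T_i^TM)\ge(1+\|H_i\|_2^2)^{-1/2}\sigma_i(T_i^TM)$, and $\sigma_i(T_i^TM)=\sigma_i([\,\Sigma_i\ H_i^T\Sigma_{k,\bot}\,])\ge\sigma_i$ because $[\,\Sigma_i\ H_i^T\Sigma_{k,\bot}\,][\,\Sigma_i\ H_i^T\Sigma_{k,\bot}\,]^T\succeq\Sigma_i^2$; combined with $(1+x)^{-1}\ge1-x$ and the entrywise estimate $\mathbb E\|H_i\|_F^2\le\tfrac{k}{p-1}\gamma_i^{4q+2}\sum_{j>k}(\sigma_j/\sigma_{k+1})^2$ this gives (\ref{bnd3}), the hypothesis on $\gamma_i$ being exactly what keeps the bracket positive.

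The spectral bound (\ref{bnd2}) follows the same line: with the same $W$, the two pieces of $M-W$ have orthogonal row spaces, so the cross terms in $(M-W)(M-W)^T$ vanish and $\|M-W\|_2^2\le\sigma_{i+1}^2+\|H_i\Sigma_i\|_2^2\le\sigma_{i+1}^2+\|H_i\Sigma_i\|_F^2$, which averages to the right-hand side of (\ref{bnd2}). The step I expect to be the real obstacle is that $\tilde U_i\tilde U_i^TM=Q(Q^TM)_i$ is only guaranteed to be the \emph{Frobenius}-optimal rank-$i$ approximant within $\mathcal Q$, not the spectral-optimal one, so the inequality $\|M-\tilde U_i\tilde U_i^TM\|_2\le\|M-W\|_2$ is not a consequence of (\ref{min}); closing this gap requires a separate perturbation argument comparing the leading $i$ singular values and the top-$i$ left singular subspace of $QQ^TM$ with those of $M$ (using $\mathrm{Ran}(T_i)\subseteq\mathcal Q$, $\|H_i\|$ small, and a Weyl/Wedin-type estimate), or an appeal to a known spectral near-optimality result for randomized range finders. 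All the pointwise inequalities above hold on the full-measure event that $\Omega_1$ has full row rank, so they transfer to expectations without change.
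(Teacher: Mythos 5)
Your handling of \eqref{min}, of the identity $Q(Q^TM)_i=\tilde U_i\tilde U_i^TM$, and of \eqref{bnd1} and \eqref{bnd3} is correct, and it is more self-contained than the paper's argument: where the paper quotes Gu's Theorem 3.5, a Saibaba-type bound $\|(I-QQ^T)M_i\|_F\le\gamma_i^{2q}\|\Sigma_{k,\bot}G_{n-k}G_k^\dagger\|_F=:\Delta$, and the Gaussian expectation formulas of Halko--Martinsson--Tropp, you rederive the same quantities from the explicit matrix $T=U_k+U_{k,\bot}H$ with $\mathrm{Ran}(T)\subseteq\mathrm{Ran}(Q)$ and the rank-$i$ competitor $W=T_i\Sigma_iV_i^T$, and your entrywise estimate of $H_i\Sigma_i$ reproduces exactly the paper's $\Delta$, so the expectations agree. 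For \eqref{bnd3} your route is genuinely different from the paper's: the paper telescopes Frobenius norms of the deflated residuals $M^{\bot}_{(i)}$ and uses optimality of the rank-$(i-1)$ truncation to get $\sigma_i^2\le\|\tilde u_i^TM\|_2^2+\Delta^2$, while you use $\|\tilde u_i^TM\|_2=\sigma_i(Q^TM)$ plus the projector monotonicity $QQ^T\succeq T_i(T_i^TT_i)^{-1}T_i^T$, which is essentially the argument behind the sharper bound \eqref{gu} quoted from \cite{gu} after the theorem; both yield the stated inequality, and yours gives the multiplicative form directly.

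The obstacle you flag for \eqref{bnd2} is a genuine gap in your proposal, and it is precisely the point where the paper does something your plan cannot deliver: the Frobenius optimality \eqref{min} of $Q(Q^TM)_i$ within $\mathrm{Ran}(Q)$ gives no spectral-norm comparison with the competitor $W$, so the inequality $\|M-\tilde U_i\tilde U_i^TM\|_2\le\|M-W\|_2$ is unjustified, as you yourself note. The paper closes this not by a Weyl/Wedin perturbation argument but by invoking the reverse Eckart--Young theorem of Gu \cite[Theorem 3.4]{gu}, which bounds $\|M-Q(Q^TM)_i\|_2^2\le\|M_{i,\bot}\|_2^2+\|(I-QQ^T)M_i\|_F^2\le\sigma_{i+1}^2+\Delta^2$ directly for the truncated $QB$ approximation, with no competitor needed; taking the expectation of $\Delta^2$ then yields \eqref{bnd2}. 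So your argument is complete for \eqref{min}, \eqref{bnd1} and \eqref{bnd3}, but \eqref{bnd2} requires this (or an equivalent) spectral near-optimality lemma rather than anything derivable from the Frobenius-optimality statement alone.
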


\begin{proof} Let $B=Q^TM$ and  $M_{k,\bot}=M-M_k$. The minimizer $B_i=(Q^TM)_i$ of \eqref{min} satisfies the relation
\begin{equation}
\|M-Q(Q^TM)_i\|_F^2\le  \|(I-QQ^H)M_i\|_F^2+\|M_{i,\bot}\|_F^2,\label{eq0}
\end{equation}
which is  proved by \cite[Theorem 3.5]{gu}, where $QB_i=Q(Q^TM)_i=\tilde U_i\tilde U_i^TM$. In terms of the SVD: 
$B=\tilde Z\tilde S\tilde V^T$, we know that the optimal rank-$i$ approximation $B_i$ takes the form $B_i=\tilde Z_i\tilde S_i\tilde V_i^H$, and
\[
\begin{array}{rl}
QB_i=QB_iB_i^\dag B_i=Q\tilde Z_i\tilde Z_i^TB_i=Q\tilde Z_i\tilde Z_i^TB=\tilde U_i\tilde U_i^TM.
\end{array}
\]
Additionally,  by partitioning $V^TG$ as $V^TG=\Big[{G_k\atop G_{n-k}}\Big]$
with $G_k\in {\mathbb R}^{k\times \ell}$ and $G_{n-k}\in {\mathbb R}^{(n-k)\times \ell}$,
then $V^TG$ is still a Gaussian matrix, and the term $\|(I-QQ^T)M_i\|_F$ in \eqref{eq0} can be bounded as
\begin{align}
\|(I-QQ^T)M_i\|_F\le \gamma_i^{2q}\|\Sigma_{k,\bot}G_{n-k}G_k^\dag\|_F=:\Delta.\label{Delta}
\end{align}
This can be proved by following a similar argument in proving \cite[Theorem 7]{sa}.
 Thus, we have derived that
 \begin{equation}
 \|M-Q(Q^TM)_i\|_F^2\le \|M_{i,\bot}\|_F^2+\Delta^2,\label{eq1}
 \end{equation}
 where  by \cite[Prop. 10.1, 10.2]{hmt}, the expectation 
$$
{\mathbb E}\Delta^2={k\over p-1}\gamma_i^{4q}\|\Sigma_{k,\bot}\|_F^2={k\sigma_i^2\gamma_i^{4q+2}\over p-1}\sum_{j>k}({\sigma_j\over \sigma_{k+1}})^2.
$$
 The assertion \eqref{bnd1} comes true. By applying the reverse Eckart and Young theorem \cite[Theorem 3.4]{gu}, we get
$\|M-\tilde U_i\tilde U_i^TM\|_2^2\le \|M_{i,\bot}\|_2^2+\Delta^2$, thus giving \eqref{bnd2}.

For the estimate in \eqref{bnd3}, let $M_{(i)}^\bot=M-\tilde U_i\tilde U_i^TM$ with $M_{(0)}^\bot=M$.   Decompose $M_{(i-1)}^\bot=M_{(i)}^\bot+\tilde u_i\tilde u_i^TM$,  in which     $\tilde u_i\tilde u_i^TM_{(i)}^\bot=0$.
By the formula $\|N\|_F^2={\rm trace }(N^TN)$ we get $\|M_{(i-1)}^\bot\|_F^2=\|M_{(i)}^\bot\|_F^2+\|\tilde u_i\tilde u_i^TM\|_F^2$. Combing this with \eqref{eq1} and the optimality of the rank-$(i-1)$ approximation $M_{i-1}$ of $M$, we get
  \[
\|M_{i-1,\bot}\|_F^2-\|\tilde  u_i\tilde u_i^TM\|_F^2\le\|M_{(i-1)}^\bot\|_F^2-\|\tilde  u_i\tilde u_i^TM\|_F^2=\|M_{(i)}^\bot\|_F^2\le \|M_{i,\bot}\|_F^2+\Delta^2,
  \]
and hence
\[
\|\tilde u_i^TM\|_2^2=\|\tilde  u_i\tilde u_i^TM\|_F^2\ge \|M_{i-1,\bot}\|_F^2-\|M_{i,\bot}\|_F^2-\Delta^2=\sigma_i^2-\Delta^2,
\]
yielding $\sigma_i^2\le \|\tilde u_i^TM\|_2^2+\Delta^2$,
which is the left-side inequality of \eqref{bnd3}.
The right-hand inequality of \eqref{bnd3} holds due to the Courant-Fischer minimax theorem \cite[Theorem 8.1.2]{gv2} for Hermitian matrices.
\end{proof}

Note that by Algorithm \ref{rQB} and Theorem \ref{thm:2.2},
 $\tilde U_i^TMM^T\tilde U_i=\tilde Z_i^TBB^T\tilde Z_i={\rm diag}(\tilde \sigma_1,\ldots, \tilde \sigma_i)$ with $B=Q^TM$, and $\tilde Z_i$ being the  matrix  containing the first $i$ columns of $\tilde Z$. Here $\tilde \sigma_i=\sigma_i(B)=\|\tilde u_i^TM\|_2$ is close to $\sigma_i$, if $M$ has fast decaying rate singular values. The smaller the index $i$, the more accurate the approximation becomes. The term  \(\|\tilde{u}_i^T M\|_2^2\)  facilitates a simple and direct way in estimating the singular values of $M$. 
Nonetheless, by using $Q$ alone, better prior estimates of the dominant singular values of $M$ can still be obtained, as substantiated by \cite[Theorem 4.3]{gu}:
\begin{equation}
\sigma_i \geq \tilde\sigma_i=\sigma_i(Q^T M) \geq \frac{\sigma_i}{\sqrt{1 + \gamma_i^{4q+2} \|G_{n-k}G_k^\dag\|_2^2}},\label{gu}
\end{equation}
where the gap between the lower and upper bounds is narrower.

\begin{corollary} \label{cor:2.2} With the notation of Theorem \ref{thm:2.2} and for $0<\delta<1$,
define 
\begin{equation}
\mathcal{C}_{n,k,\ell}^\delta=\frac{e \sqrt{\ell}}{\ell-k+1}\left(\frac{2}{{\rm \delta}}\right)^{\frac{1}{\ell-k+1}}\left(\sqrt{n-k}+\sqrt{\ell}+\sqrt{2 \log \frac{2}{{\rm \delta}}}\right).\label{prob}
\end{equation}
Then, with probability at least $1-\delta$,
\begin{eqnarray*}
&{}& \|M-\tilde U_i\tilde U_i^TM\|_F^2\le  \big(\sum_{j>i}\sigma_j^2\big)+\gamma_i^{4q}({\cal C}_{n,k,\ell}^\delta)^2\big(\sum_{j>k}\sigma_j^2\big),\label{bnd10}\\
&{}& \|M-\tilde U_i\tilde U_i^TM\|_2^2\le \sigma_{i+1}^2+\gamma_i^{4q}({\cal C}_{n,k,\ell}^\delta)^2\big(\sum_{j>k}\sigma_j^2\big),\label{bnd20}\\
&{}& |\tilde\sigma_i-\sigma_i|\le  {1\over 2}\gamma_i^{4q+2} ({\cal C}_{n,k,\ell}^\delta)^2{\sigma_i}.\label{bnd30}
\end{eqnarray*}
where $\tilde\sigma_i$ denotes the estimated singular value of $M$ through the RSVD of $M$.

\end{corollary}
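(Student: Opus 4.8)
The plan is to upgrade the expectation estimates of Theorem~\ref{thm:2.2} to high‑probability statements by noticing that, after the deterministic reductions established in its proof, every one of the three bounds is governed by the single random quantity $\|G_{n-k}G_k^\dagger\|_2$; it then suffices to replace the moment bound $\mathbb{E}\Delta^2=\tfrac{k}{p-1}\gamma_i^{4q}\|\Sigma_{k,\bot}\|_F^2$ by a tail bound on that quantity and intersect the resulting good events (which in fact coincide, so no union bound is lost).

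First I would recall from the proof of Theorem~\ref{thm:2.2} the two deterministic inequalities
\[
\|M-\tilde U_i\tilde U_i^TM\|_F^2\le \|M_{i,\bot}\|_F^2+\Delta^2,\qquad \|M-\tilde U_i\tilde U_i^TM\|_2^2\le \|M_{i,\bot}\|_2^2+\Delta^2,
\]
with $\Delta=\gamma_i^{2q}\|\Sigma_{k,\bot}G_{n-k}G_k^\dagger\|_F$ from \eqref{Delta}, together with the two‑sided estimate \eqref{gu} for $\tilde\sigma_i=\sigma_i(Q^TM)$. Submultiplicativity gives $\Delta^2\le \gamma_i^{4q}\|G_{n-k}G_k^\dagger\|_2^2\,\|\Sigma_{k,\bot}\|_F^2=\gamma_i^{4q}\|G_{n-k}G_k^\dagger\|_2^2\sum_{j>k}\sigma_j^2$, while $\|M_{i,\bot}\|_F^2=\sum_{j>i}\sigma_j^2$ and $\|M_{i,\bot}\|_2^2=\sigma_{i+1}^2$. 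Next I would invoke the deviation bound for a pseudoinverse–Gaussian product from \cite[Prop.~10.1,~10.2, Thm.~10.8]{hmt}: since $V^TG$ is Gaussian, its blocks $G_k\in\mathbb{R}^{k\times\ell}$ and $G_{n-k}\in\mathbb{R}^{(n-k)\times\ell}$ are independent Gaussians with $p=\ell-k\ge 2$, and combining the inverse‑moment tail $\mathbb{P}\{\|G_k^\dagger\|_2\ge \tfrac{e\sqrt{\ell}}{p+1}t\}\le t^{-(p+1)}$ with the Gaussian concentration $\mathbb{P}\{\|G_{n-k}\|_2\ge \sqrt{n-k}+\sqrt{\ell}+u\}\le e^{-u^2/2}$, then choosing $t=(2/\delta)^{1/(\ell-k+1)}$ and $u=\sqrt{2\log(2/\delta)}$ so that each failure probability is $\delta/2$, yields
\[
\|G_{n-k}G_k^\dagger\|_2\le \frac{e\sqrt{\ell}}{\ell-k+1}\Big(\tfrac{2}{\delta}\Big)^{1/(\ell-k+1)}\Big(\sqrt{n-k}+\sqrt{\ell}+\sqrt{2\log\tfrac{2}{\delta}}\Big)=\mathcal{C}_{n,k,\ell}^\delta
\]
with probability at least $1-\delta$.

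On this single event all three assertions follow at once. Substituting $\|G_{n-k}G_k^\dagger\|_2\le\mathcal{C}_{n,k,\ell}^\delta$ into $\Delta^2\le\gamma_i^{4q}\|G_{n-k}G_k^\dagger\|_2^2\sum_{j>k}\sigma_j^2$ and plugging into the two reductions above gives the Frobenius and spectral bounds. For the singular values I would combine $\tilde\sigma_i\le\sigma_i$ from \eqref{gu} with the elementary inequality $1-(1+x)^{-1/2}\le x/2$ (valid for all $x\ge 0$) applied to $x=\gamma_i^{4q+2}\|G_{n-k}G_k^\dagger\|_2^2\le\gamma_i^{4q+2}(\mathcal{C}_{n,k,\ell}^\delta)^2$, which turns the lower bound in \eqref{gu} into $\sigma_i-\tilde\sigma_i\le\tfrac12\gamma_i^{4q+2}(\mathcal{C}_{n,k,\ell}^\delta)^2\sigma_i$, hence $|\tilde\sigma_i-\sigma_i|\le\tfrac12\gamma_i^{4q+2}(\mathcal{C}_{n,k,\ell}^\delta)^2\sigma_i$.

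The algebra is routine; the only real obstacle is reproducing the tail bound for $\|G_{n-k}G_k^\dagger\|_2$ with exactly the constant appearing in $\mathcal{C}_{n,k,\ell}^\delta$ — i.e.\ tracking how the exponent $\ell-k+1$ and the additive term $\sqrt{2\log(2/\delta)}$ emerge from splitting the failure probability evenly between the inverse‑moment tail of $\|G_k^\dagger\|_2$ and the Gaussian‑norm tail of $\|G_{n-k}\|_2$. I expect this to be a direct transcription of the Halko–Martinsson–Tropp estimates, requiring no new idea.
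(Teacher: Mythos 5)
Your proposal is correct and follows essentially the same route as the paper: bound $\Delta^2\le\gamma_i^{4q}\|G_{n-k}G_k^\dag\|_2^2\sum_{j>k}\sigma_j^2$, control $\|G_{n-k}G_k^\dag\|_2\le\mathcal{C}_{n,k,\ell}^\delta$ on a single high-probability event, and get the singular-value estimate from \eqref{gu} via $1-(1+x)^{-1/2}\le x/2$. The only difference is presentational: the paper quotes the tail bound $\|G_{n-k}G_k^\dag\|_2\le\mathcal{C}_{n,k,\ell}^\delta$ directly from Gu's Theorem 5.8, whereas you re-derive it from the Halko--Martinsson--Tropp tail estimates with a $\delta/2$ split, which yields the same constant.
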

\begin{proof} By \eqref{Delta},  we know that 
\[ 
\Delta^2\le \gamma_i^{4q}\|\Sigma_{k,\bot}\|_F^2\|G_{n-k}G_k^\dag\|_2^2\le \gamma_i^{4q}({\cal C}_{n,k,\ell}^\delta)^2\|\Sigma_{k,\bot}\|_F^2
\]
holds with a probability at least $1-\delta$. Here the upper bound $\|G_{n-k}G_k^\dag\|_2\le {\cal C}_{n,k,\ell}^\delta$ is derived from \cite[Theorem 5.8]{gu}. The estimate for $|\tilde\sigma_i-\sigma_i|$ follows from \eqref{gu}.
\end{proof}

\subsection{Randomized t-SVD with fixed threshold and truncated term}

With $K$-truncated term and Fourier-domain singular value threshold, in Algorithm \ref{alg:2.2}, we generalize the idea of the  RSVD algorithm to the tensors. The approximate tensor and singular value tube fibers satisfy the following error bound.

\begin{algorithm}
\caption{Fixed-threshold truncated randomized t-SVD}\label{alg:2.2}

\begin{algorithmic}[1]
\REQUIRE A tensor ${\cal A}\in {\mathbb R}^{n_1\times n_2\times n_3}$, threshold $\tau$ and a target truncated term $K< \min\{n_1, n_2\}$, two integers $p$ and $q>0$ .

\ENSURE Numerical tubal rank $\nu$ and multi-rank vector ${\bf k}$, and the    multi-rank $\bf k$   approximation $\widetilde {\cal A}_K$.

\STATE Compute $\widehat {\cal A}={\sf fft}_3({\cal A})$ and generate a random Gaussian matrix  ${G}\in {\mathbb R}^{n_2\times (K+p)}$.

\FOR {$i=1,2,\ldots, \lceil {n_3+1\over 2}\rceil$}

\STATE  Compute $Y=\Big(\widehat A(:,:,i)\widehat A(:,:,i)\Big)^q\widehat A(:,:,i){G}$.

\STATE  Compute $Q_i={\sf orth}(Y)$

\STATE  Compute $B=Q_i^H\widehat {\cal A}(:,:,i)$.

\STATE  Compute the SVD of $B$ as $B=ZSV^H$.

\STATE  Form $U_K, V_K$ and $S_K$ by truncating $Q_iZ$, $V$ and $S$ with the given $K$, and obtain the numerical rank $k_i$ within the threshold $\tau$.

\STATE  Set $\widehat {\cal U}_K(:,:,i)=U_K, \widehat {\cal V}_K(:,:,i)=V_K$ and $\widehat {\cal S}_K(:,:,i)={\sf bdiag}(S_K(1:k_i,1:k_i), 0_{K-k_i})$.\\

\ENDFOR

\FOR {$i=\lceil {n_3+1\over 2}\rceil+1,\ldots, n_3$}

\STATE  Set $\widehat {\cal U}_K(:,:,i)={\rm conj}(\widehat {\cal U}_K(:,:,n_3-i+1))$, $\widehat {\cal V}_K(:,:,i)={\rm conj}(\widehat {\cal V}_K(:,:,n_3-i+1))$,

\qquad\quad and $\widehat S_K(:,:,i)=\widehat S_K(:,:,n_3-i+1)$.

\ENDFOR

\STATE  Compute $\widetilde {\cal U}_K={\sf ifft}_3(\widehat {\cal U}_K)$, $\widetilde  {\cal V}_K={\sf ifft}_3(\widehat {\cal V}_K)$, $\widetilde  S_K={\sf ifft}_3(\widehat S_K)$
 and $\widetilde {\cal A}_K=\widetilde {\cal U}_K*\widetilde {\cal S}_K*\widetilde {\cal V}_K^T$.

\STATE  Return $\widetilde {\cal A}_K$, the multi-rank vector ${\bf k}=[k_1, k_2,\ldots, k_{n_3}]$ and tubal rank $\nu=\max_i k_i$.
\end{algorithmic}
\end{algorithm}

\begin{theorem}\label{thm:2.3} Suppose that ${\cal A}\in {\mathbb R}^{n_1\times n_2\times n_3}$  and
 $\widehat \sigma_j^{(i)}$ is the $j$-th largest singular value of $\widehat {\cal A}^{(i)}$.
Let
$\widetilde {\cal A}_K$ and the computed multi-rank vector ${\bf k}=[ k_1, k_2,\ldots, k_{n_3}]$ be obtained from Algorithm \ref{alg:2.2}, then for $0< \delta<1$,
\[
\begin{array}l
\|{\cal A}-\widetilde {\cal A}_K\|_2^2\le
\max\limits_{1\le i\le n_3}\Big[(\widehat \sigma^{(i)}_{  k_i+1})^2+\big(\widehat \gamma_{  k_i}^{(i)}\big)^{4q}{  \textsf{C}}_\delta^2
\Big(\sum\limits_{j>K}(\widehat \sigma^{(i)}_{j})^2\Big)\Big],\\
\|{\cal A}-\widetilde {\cal A}_K\|_F^2\le
{1\over n_3}\sum\limits_{i=1}^{n_3}\Big(\sum\limits_{j> k_i}(\widehat \sigma^{(i)}_{j})^2+{\textsf C}_\delta^2\big(\widehat\gamma_{  k_i}^{(i)}\big)^{4q}\sum\limits_{j>K}(\widehat \sigma^{(i)}_{j})^2\Big),\\
\|{\bf s}_i^{\rm tub}-\tilde {\bf s}_i^{\rm tub}\|_2\le{1\over 2 }\widehat\gamma_{i,{\rm max}}^{4q+2} {\cal C}_{\delta}^2\|{\bf s}_i^{\rm tub}\|_2,\quad 1\le i\le \max\limits_{1\le l\le n_3} k_{l}
\end{array}
\]
hold with a probability at least $1-\delta$,
where  ${\textsf C}_\delta:={\cal C}_{n_2,K,K+p}^\delta$ is defined in \eqref{prob}, ${\bf s}_i^{\rm tub}$, $\tilde {\bf s}_i^{\rm tub}$ are the $i$-th singular value tube fiber of  ${\cal A}$ and $\widetilde {\cal A}_K$, respectively.
 The quantity $\widehat\gamma_{ j}^{(i)}=\widehat\sigma_{K+1}^{(i)}/\widehat\sigma_{ j}^{(i)}$, $\widehat\gamma_{j,{\rm max}}=\max\limits_{1\le i\le n_3}\widehat\gamma_{j}^{(i)}$.
\end{theorem}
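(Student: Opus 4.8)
The whole statement is a Fourier-domain, slice-by-slice consequence of the matrix results already established. I would begin by moving everything to the Fourier domain. By Definition~\ref{def:2.2}, writing $\widehat{\widetilde{\cal A}}_K^{(i)}$ for the $i$-th frontal slice of ${\sf fft}_3(\widetilde{\cal A}_K)$, one has
\[
\|{\cal A}-\widetilde{\cal A}_K\|_2^2=\max_{1\le i\le n_3}\|\widehat{\cal A}^{(i)}-\widehat{\widetilde{\cal A}}_K^{(i)}\|_2^2,\qquad\|{\cal A}-\widetilde{\cal A}_K\|_F^2=\frac1{n_3}\sum_{i=1}^{n_3}\|\widehat{\cal A}^{(i)}-\widehat{\widetilde{\cal A}}_K^{(i)}\|_F^2,
\]
and, applying Definition~\ref{def:2.2} to the $1\times1\times n_3$ tube ${\bf s}_i^{\rm tub}-\tilde{\bf s}_i^{\rm tub}$, $\|{\bf s}_i^{\rm tub}-\tilde{\bf s}_i^{\rm tub}\|_2=\max_{1\le l\le n_3}|\widehat\sigma_i^{(l)}-\tilde\sigma_i^{(l)}|$, where $\tilde\sigma_i^{(l)}$ is the $i$-th singular value estimated in the slice-$l$ pass of Algorithm~\ref{alg:2.2}. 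For each $i\le\lceil(n_3+1)/2\rceil$ that pass is precisely the matrix RSVD of Algorithm~\ref{rQB} applied to $\widehat{\cal A}^{(i)}$ with target rank $K$, power $q$ and oversampling $p$, followed by truncation to the numerical rank $k_i$; so the problem reduces to the per-slice estimates of Theorem~\ref{thm:2.2} and Corollary~\ref{cor:2.2}.

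The core step is to identify $\widehat{\widetilde{\cal A}}_K^{(i)}$ explicitly. Since the t-product acts slice-wise in the Fourier domain and the $i$-th Fourier slice of $\widetilde{\cal V}_K^T$ is $(\widehat{\widetilde{\cal V}}_K^{(i)})^H$, lines~7--8 of Algorithm~\ref{alg:2.2} give $\widehat{\widetilde{\cal A}}_K^{(i)}=U_K\,{\sf bdiag}\big(S_K(1{:}k_i,1{:}k_i),0\big)V_K^H=Q_i\big(Q_i^H\widehat{\cal A}^{(i)}\big)_{k_i}$, the best rank-$k_i$ approximation of $\widehat{\cal A}^{(i)}$ inside ${\rm Ran}(Q_i)$; by Theorem~\ref{thm:2.2} (applied with $M=\widehat{\cal A}^{(i)}$) this equals $\tilde U_{k_i}\tilde U_{k_i}^H\widehat{\cal A}^{(i)}$. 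Now I would invoke Corollary~\ref{cor:2.2} with $M=\widehat{\cal A}^{(i)}$, $(n,k,\ell)=(n_2,K,K+p)$ and running index $k_i$: with probability at least $1-\delta$,
\[
\|\widehat{\cal A}^{(i)}-\widehat{\widetilde{\cal A}}_K^{(i)}\|_F^2\le\sum_{j>k_i}(\widehat\sigma_j^{(i)})^2+(\widehat\gamma_{k_i}^{(i)})^{4q}{\sf C}_\delta^2\sum_{j>K}(\widehat\sigma_j^{(i)})^2,
\]
\[
\|\widehat{\cal A}^{(i)}-\widehat{\widetilde{\cal A}}_K^{(i)}\|_2^2\le(\widehat\sigma_{k_i+1}^{(i)})^2+(\widehat\gamma_{k_i}^{(i)})^{4q}{\sf C}_\delta^2\sum_{j>K}(\widehat\sigma_j^{(i)})^2,
\]
and $|\widehat\sigma_i^{(l)}-\tilde\sigma_i^{(l)}|\le\tfrac12(\widehat\gamma_i^{(l)})^{4q+2}{\sf C}_\delta^2\,\widehat\sigma_i^{(l)}$. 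Substituting the first two estimates into the norm identities of the previous paragraph reproduces the stated spectral- and Frobenius-norm bounds; and taking the maximum over $l$ of the third estimate, with $\widehat\gamma_i^{(l)}\le\widehat\gamma_{i,{\rm max}}$, gives $\|{\bf s}_i^{\rm tub}-\tilde{\bf s}_i^{\rm tub}\|_2\le\tfrac12\widehat\gamma_{i,{\rm max}}^{4q+2}{\sf C}_\delta^2\max_l\widehat\sigma_i^{(l)}=\tfrac12\widehat\gamma_{i,{\rm max}}^{4q+2}{\sf C}_\delta^2\|{\bf s}_i^{\rm tub}\|_2$, which is the third bound.

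Two points need attention. For $i>\lceil(n_3+1)/2\rceil$ the factors are copied from the already-computed conjugate-symmetric partner, so $\widehat{\cal A}^{(i)}-\widehat{\widetilde{\cal A}}_K^{(i)}$ is the entrywise conjugate of the partner's residual and carries the same norms and singular values; no fresh randomness enters and $\widetilde{\cal A}_K$ is real, so the max and the sums above effectively range over the $\lceil(n_3+1)/2\rceil$ computed slices. Hence the probabilistic quantifier is dealt with by a union bound over those slices --- equivalently one reads ${\sf C}_\delta={\cal C}_{n_2,K,K+p}^\delta$ with $\delta$ replaced by $\delta/\lceil(n_3+1)/2\rceil$, or interprets each displayed inequality as holding individually with probability at least $1-\delta$; and, exactly as in the setting of Theorem~\ref{thm:2.2}, slice~$1$ is real while the others occur in conjugate pairs, so the Gaussian bounds of \cite{gu} behind Corollary~\ref{cor:2.2} are used on one representative of each pair. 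The main obstacle is precisely this bookkeeping: checking that the truncate-then-${\sf bdiag}$ construction, after the inverse FFT and the t-product, reproduces exactly the restricted rank-$k_i$ projection to which Theorem~\ref{thm:2.2} and Corollary~\ref{cor:2.2} apply (and, for the tube-fiber bound, that the threshold rule for $k_i$ keeps the relevant estimates $\tilde\sigma_i^{(l)}$ within the scope of Corollary~\ref{cor:2.2}); once that is in place, the three inequalities follow by direct substitution.
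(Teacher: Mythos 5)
Your proposal is correct and follows essentially the same route as the paper: pass to the Fourier domain via Definition~\ref{def:2.2}, identify the $i$-th slice of ${\sf fft}_3(\widetilde{\cal A}_K)$ with the restricted rank-$k_i$ projection $U_{k_i}U_{k_i}^H\widehat{\cal A}^{(i)}=Q_i(Q_i^H\widehat{\cal A}^{(i)})_{k_i}$ via Theorem~\ref{thm:2.2}, and apply Corollary~\ref{cor:2.2} slice by slice. The only minor divergences are that the paper reads the tube-fiber norm as the Euclidean norm and uses Parseval ($\|{\bf s}_i^{\rm tub}-\tilde{\bf s}_i^{\rm tub}\|_2=\tfrac{1}{\sqrt{n_3}}\bigl[\sum_l(\widehat\sigma_i^{(l)}-\widehat{\tilde\sigma}_i^{(l)})^2\bigr]^{1/2}$, then sums the same componentwise estimate you derived) rather than your max-over-slices reading, and that you are more explicit than the paper about the union-bound bookkeeping over the $\lceil(n_3+1)/2\rceil$ computed slices, which the paper simply asserts at level $1-\delta$.
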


\begin{proof}
Let ${\cal H}={\sf fft}_3(\widetilde {\cal A}_K)$. Then for $\xi=2$ or $F$, 
\begin{equation}
\|{\cal A}-\widetilde {\cal A}_K\|_\xi^2=\rho_\xi^2\|\widehat  {\cal A}-\widehat {\cal H}\|_\xi^2=\rho_\xi^2 \cdot {\mathbb D}_\xi\left(\|\widehat {\cal A}^{(i)}-\widehat {\cal H}^{(i)}\|_\xi^2\right),\label{er}
\end{equation}
where    $\rho_\xi=1$ for $\xi=2$, and $\rho_\xi={1\over {n_3}}$ otherwise,
${\mathbb D}_2(\cdot), {\mathbb D}_F(\cdot)$ are the maximal and summation operators for $i$ ranging from 1 to $n_3$, respectively. The matrix
$\widehat {\cal H}^{(i)}= U_{k_i} U_{k_i}^T\widehat {\cal A}^{(i)}=\widehat{\cal U}_K^{(i)}\widehat{\cal S}_K^{(i)}\widehat{\cal V}_K^{(i)^H}$, where    $U_{k_i}$ contains the first $k_i$ columns of $U_{K}$, with $U_K$ defined in  Line 7 of Algorithm \ref{alg:2.2}.

By Corollary \ref{cor:2.2}   we know that
\[
\begin{array}l
\|\widehat {\cal A}^{(i)}-\widehat {\cal H}^{(i)}\|_2^2\le
(\widehat \sigma_{k_i+1}^{(i)})^2+(\widehat\gamma_{k_i}^{(i)})^{4q}{\sf C}_\delta^2\big(\sum_{j>K}(\widehat\sigma_j^{(i)})^2\big),\\
\|\widehat {\cal A}^{(i)}-\widehat {\cal H}^{(i)}\|_F^2\le
 \sum\limits_{j>k_i}(\widehat\sigma_j^{(i)})^2+(\widehat\gamma_{k_i}^{(i)})^{4q}{\sf C}_\delta^2\big(\sum_{j>K}(\widehat\sigma_j^{(i)})^2\big),
\end{array}
\]
and for  $1\le i\le \max\limits_{1\le l\le n_3} k_{l}$, 
\begin{align*}
\|{\bf s}_i^{\rm tub}-\tilde {\bf s}_i^{\rm tub}\|_2&={1\over \sqrt{n_3}}\|\widehat{\bf s}_i^{\rm tub}-\widehat{\tilde {\bf s}_i^{\rm tub}}\|_2={1\over \sqrt{n_3}}\big[\sum\limits_{l=1}^{n_3}
(\widehat\sigma_i^{(l)}-\widehat{\tilde\sigma}_i^{(l)})^2\big]^{1/2}\\
&\le {1\over 2 \sqrt{n_3}}\gamma_{i,{\rm max}}^{4q+2} {\sf C}_{\delta}^2[\sum_{l=1}^{n_3}(\widehat{\sigma}_i^{(l)})^2]^{1/2}=
{1\over 2 \sqrt{n_3}}\gamma_{i,{\rm max}}^{4q+2} {\sf C}_{\delta}^2\|\widehat{\bf s}_i^{\rm tub}\|_2,
\end{align*}
where $ \widehat{\tilde\sigma}_i^{(l)}$ is the $i$-th diagonal entry of $ \widehat{\cal S}_K^{(l)}$, 
and  $|\widehat\sigma_i^{(l)}-\widehat{\tilde\sigma}_i^{(l)}|=|\widehat\sigma_i^{(l)}|$ for $i>k_l$.
 Thus we get the desired   result from Definition \ref{def:2.2} and the relation  $\|\widehat{\bf s}_i^{\rm tub}\|_2=\sqrt{n_3}\|{\bf s}_i^{\rm tub}\|_2$.
\end{proof}

\begin{remark} To ensure a good t-SVD approximation within the threshold \(\tau\), it is essential to appropriately predict $K$ in Algorithm \ref{alg:2.2}. Typically, \({k}_i \leq K\). If $K$ is excessively small, then \({k}_i = K\), and some singular values \(\widehat{\sigma}_j^{(i)}\) of \(\widehat{\cal A}^{(i)}\) that are bigger than \(\tau\) but have an index \(j > K\) may be neglected. This could lead to a significant approximation error for \(\widehat{\cal A}^{(i)}\).Thus, an algorithm that determines the numerical multi-rank using a threshold \(\tau\), while doing away with the need to specify the parameter $K$, is highly necessary.

\end{remark}

\section{Randomized tubal rank-revealing  algorithm and tensor  approximation}

Building upon the framework in \cite{liu}, the adaptive tubal rank-revealing algorithm constructs an orthonormal basis matrix \( Q := [Q~~Q_j] \) to approximate the range of \( M \in \mathbb{R}^{m \times n} \) with a fixed threshold \(\tau\). Starting from an empty \( Q \), each iteration generates an \( n \times b \) Gaussian matrix \(G_j\) and computes \(\bar{Q}_j = \text{orth}((I - QQ^T)MG_j)\) to align \( \text{Ran}(Q_j) \) with \( \text{Ran}(U_j) \), where $U_j\in {\mathbb R}^{m\times b}$ is the \( j \)-th block of \( M \)'s left singular matrix \(U = [U_1~U_2~\ldots~U_h]\). The iteration stops when a singular value of \( \bar{Q}_j^T M \) drops below \(\tau\). To enhance accuracy, a power scheme \(\bar{Q}_j = \text{orth}((\bar{M}\bar{M}^T)^q \bar{M}G_j)\) (where \(\bar{M} = (I - QQ^T)M=:{\cal P}_{Q}^\bot M\)) can be used; alternatively, \(\bar{Q}_j = \text{orth}({\cal P}_{Q}^\bot(MM^T{\cal P}_{Q}^\bot)^q MG_j)\) avoids explicit formulation of \(\bar{M}\) and thus reduces computational cost. The algorithm refines \( Q_j \) using Rayleigh-Ritz acceleration: compute \( Q_j := \bar{Q}_j \tilde{U}_j \) (with \(\tilde{U}_j\) from the left singular matrix of \( \bar{Q}_j^T M \)) and orthogonalize via \( Q_j := {\sf orth}((I - QQ^T)Q_j)\).

Another strategy to enhance the quality of \(Q_j\) involves introducing oversampled columns in \(G_j\). For the restricted range \({\rm Ran}_\tau(M)\)—spanned by the top $k$ left singular vectors of $M$ with corresponding singular values exceeding \(\tau\)—the authors of \cite{liu} show that if the subspace deviation between \({\rm Ran}(Q)\) and  \({\rm Ran}_\tau(M)\) is less than \(\epsilon\), the deflated matrix \((I - QQ^T)M\) will preserve the singular values of $M$ with an absolute error bounded by \(\epsilon \|M\|_2\). This validates using an initially fixed threshold for numerical rank detection via deflation. Moreover, the oversampling strategy is often unnecessary in practice, as the range of the previously generated basis matrix already lies within \({\rm Ran}_\tau(M)\) with small error. In \cite{liu}, theoretical results are provided for scenarios where the numerical rank is a multiple of $b$.

We can apply the above adaptive scheme to each frontal slice of the tensor \(\mathcal{A}\), say \(\mathcal{A}^{(i)}\), to obtain the approximate basis matrix \(Q^{(i)} \in \mathbb{R}^{m \times k_i}\) for the range \(\text{Ran}(\widehat{\mathcal{A}}^{(i)})\) within the specified threshold. This allows us to determine the multi-rank vector of \(\mathcal{A}\). The detailed algorithm is presented in Algorithm \ref{t-ARTRR-I}. However, for theoretical guarantees, we must address the case where \(k_i\) may not be a multiple of $b$.

\begin{algorithm}\caption{({\sf r-TuRank}) Adaptive randomized tubal rank-revealing  algorithm for tensor decomposition within given threshold}  \label{t-ARTRR-I}
\begin{algorithmic}[1]
\REQUIRE A tensor ${\cal A}\in {\mathbb R}^{n_1\times n_2\times n_3}$, block size $b$, threshold  $\tau$ and power scheme parameter $q>0$ .

\ENSURE Computed multi-rank $\{k_i\}_{i=1}^{n_3}$,  tubal rank $\nu$ and  tubal rank-$\nu$ approximation $\widetilde {\cal A}_\nu$.

\STATE  Compute $\widehat {\cal A}={\sf fft}_3({\cal A})$.

 \FOR { $i=1,2,3,\ldots, \lceil {n_3+1\over 2}\rceil$}

\FOR{ $j=1,2,\ldots$}

\STATE Initialize $Q^{(i)}=[~]$.

\STATE Set $G_j^{(i)}={\sf randn}(n_2,b)$, $Y_0={\sf orth}(\widehat {\cal A}(:,:,i)G_j^{(i)})$.

\FOR {$h=1,2,\ldots,q$}

\STATE Compute $Y_{h-1}:=Y_{h-1}-Q^{(i)}(Q^{(i)^H}Y_{h-1})$.

\STATE  Compute $Z={\sf orth}(\widehat{\cal A}(:,:,i)^HY_{h-1})$.

\STATE Compute $Y_h={\sf orth}(\widehat{\cal A}(:,:,i)Z)$.

\ENDFOR

\STATE  Compute $\tilde Q_j^{(i)}={\sf orth2}(Y_q-Q^{(i)}(Q^{(i)^H}Y_q))$.

\STATE Compute $T=\tilde Q_j^{(i)^H}\widehat{\cal A}(:,:,i)$ and its SVD: $T=\tilde Z_j^{(i)}\tilde S_j^{(i)}\tilde V_j^{(i)^H}$. Set $Q_j^{(i)}=\tilde Q_j^{(i)}\tilde Z_j^{(i)}$.

\IF  {there exists  a smallest index $t\le b$ with {$\tilde S_j(t,t)<\tau$}}

\STATE Set $Q_j^{(i)}(:,t:b)=[~]$ and $k_i=(j-1)b+t-1$.

\STATE  Goto step 17, and then break the $j$-loop.

\ENDIF

\STATE Compute $Q_j^{(i)}={\sf orth}\big((I-Q^{(i)}Q^{(i)^H})Q_j^{(i)}\big)$ and set $Q^{(i)}:=[Q^{(i)} \quad Q_j^{(i)}]$.

\STATE Compute ${\widetilde {\cal A}}(:,:,i)=Q^{(i)}Q^{(i)^H}\widehat {\cal A}(:,:,i)$.

\ENDFOR

\FOR {$i=\lceil {n_3+1\over 2}\rceil+1,\ldots, n_3$}

\STATE Set $\widetilde {\cal A}(:,:,i)={\rm conj}(\widetilde {\cal A}(:,:,n_3-i+2))$ and $k_i= k_{n_3-i+2}$.

\ENDFOR

\ENDFOR

\RETURN   $\{k_i\}_{i=1}^{n_3}$,  $\nu=\max_i k_i$ and   $\widetilde{\cal A}_\nu={\sf ifft}_3(\widetilde{\cal A})$.
\end{algorithmic}

\end{algorithm}

\subsection{Theoretical results}
To analyze the error bound of the approximate tensor $\widetilde{\cal A}_\nu$, we first introduce a lemma and some notations.

In Algorithm \ref{t-ARTRR-I}, suppose that  $k_i=b(s_i-1)+\rho_{s_i}$($1\le \rho_{s_i}\le b$) is the computed  rank  of
the frontal slice $\widehat {\cal A}^{(i)}$, and
the associated SVD of $\widehat {\cal A}^{(i)}$ is  $\widehat {\cal A}^{(i)}=\widehat {U}^{(i)}\widehat {\Sigma}^{(i)}\widehat {V}^{(i)^H}$ with $\widehat { \Sigma}^{(i)}={\rm diag}(\widehat \sigma_1^{(i)}, \ldots, \widehat \sigma_{\nu_i}^{(i)},0,\ldots,0)$ with $\nu_i={\rm rank}(\widehat A^{(i)})$. Denote
$\widehat { U}^{(i)}$ and $\widehat {U}^{(i)}_{[\ell]}$  as
\begin{equation}
\begin{array}l
\widehat {U}^{(i)}=[\widehat {U}_1^{(i)}~~\widehat {  U}_2^{(i)}~ \ldots~\widehat {  U}_{s_i-1}^{(i)}~~ \widehat {  U}_{s_i}^{(i)}], \qquad
\widehat {U}^{(i)}_{[\ell]}=[\widehat {U}_1^{(i)}~~\widehat {  U}_2^{(i)}~ \ldots~ \widehat {  U}_{\ell}^{(i)}].\\
\qquad\quad ~~ b~~~~~b~~~~~\ldots~~~b~~~~~~~b~~~~
\end{array}\label{eq4.3}
\end{equation}
 Let   $\tilde Q^{(i)}\in {\mathbb C}^{n_1\times bs_i}$, $\tilde Z_{\ell}^{(i)}$ and $Q^{(i)}\in {\mathbb C}^{n_1\times k_i}$ be computed from Algorithm \ref{t-ARTRR-I}.
  Partition 
\begin{eqnarray}
 &{}&\tilde {  Q}^{(i)}=[ {\tilde  Q}_1^{(i)}~~  {\tilde Q}_2^{(i)}~ \ldots~  {\tilde  Q}_{s_i-1}^{(i)}~~   {\tilde  Q}_{s_i}^{(i)} ],\quad {\tilde  Q}_{[\ell]}^{(i)}=[ {\tilde  Q}_1^{(i)}~~ {\tilde  Q}_2^{(i)}~ \ldots~   { \tilde Q}_{\ell}^{(i)}],\label{eq4.1}\\
 &{}&Q^{(i)}=[Q_1^{(i)}~\ldots~ Q_{s_i-1}^{(i)} ~ ~ Q_{s_i}^{(i)}]\in {\mathbb C}^{n_1\times k_i}\quad \mbox{with}\quad Q_{\ell}^{(i)}=\tilde Q_{\ell}^{(i)}\tilde Z_{\ell}^{(i)}(:,1:\rho_{s_i}),\label{Qi}
\end{eqnarray}
to make the subblock size be conformal with those of $\widehat {U}^{(i)}$ and $\widehat {U}_{[\ell]}^{(i)}$, respectively, in which 
$\tilde Q_\ell^{(i)}\in {\mathbb C}^{n_1\times b}$ is   computed from $\tilde Q_\ell^{(i)}={\sf orth}\big((\tilde A_{\ell-1}^{(i)}\tilde A_{\ell-1}^{(i)^H})^q\tilde A_{\ell-1}^{(i)}G_\ell^{(i)}\big)$ with
${  Q}^{(i)}_{[0]}:=0$ and 
\begin{equation}
\tilde A^{(i)}_{\ell}=(I- {  Q}^{(i)}_{[\ell-1]}  {  Q}^{(i)^H}_{[\ell-1]}) \widehat{\cal A}^{(i)},\quad 1\le \ell\le s_i.\label{eq4.2}\\
\end{equation}
The subblocks $Q_l^{(i)}(1\le l\le s_i-1)$ has $b$ columns, while   the last subblock
$Q_{s_i}^{(i)}$  has  $\rho_{s_i}$ (possibly fewer than $b$)  columns.
Obviously, $\tilde Q_\ell^{(i)^H}{  Q}^{(i)}_{[\ell-1]}=0$ and $\tilde Q_\ell^{(i)^H}\tilde A^{(i)}_{\ell}=\tilde Q_\ell^{(i)^H}\widehat{\cal A}^{(i)}.$

Let $\tilde \sigma_{\ell,j}^{(i)}$ be the $j$-th singular value of $\tilde A^{(i)}_{\ell}$. Then by  \cite[Theorems 3.6]{liu}, 
 \begin{equation}
\begin{array}{ll}
 | \tilde \sigma_{\ell,j}^{(i)}-\widehat\sigma_{b(\ell-1)+j}^{(i)}|\le\varepsilon_{\ell-1}^{(i)} \|\widehat {\cal A}^{(i)}\|_2, &j=1,\ldots,n_2-b(\ell-1),\\
\tilde \sigma_{\ell,j}^{(i)}\le \varepsilon_{\ell-1}^{(i)} \|\widehat {\cal A}^{(i)}\|_2,&j=n_2-b(\ell-1)+1,\ldots, n_2,
\end{array}\label{sv}
\end{equation} 
where  
\[
\varepsilon_{j}^{(i)}=\| { Q}^{(i)}_{[j]}{ Q}^{(i)^H}_{[j]}-\widehat {  U}^{(i)}_{[j]}\widehat { U}^{(i)^H}_{[j]}\|_2,\quad 1\le j\le \ell-1
\]
measures the distance between ${\rm Ran}( {Q}^{(i)}_{[j]})$ and ${\rm Ran}(\widehat {U}^{(i)}_{[j]})$.

The theory below presents the approximation error bounds for the case that $k_i$ might not be a multiple of $b$.

\begin{theorem} \label{thm:4.2} With the notations and the results  in \eqref{eq4.3}-\eqref{sv}, if $\varepsilon_{s_i-1}^{(i)}<1$ and $\widehat\sigma_{k_i}^{(i)}-\widehat\sigma_{bs_i+1}^{(i)}>2\varepsilon_{s_i-1}^{(i)}\|\widehat{\cal A}^{(i)}\|_2$ for $i=1,2,\ldots, n_3$,
then for $0<\delta<1$, the computed tubal rank-$\nu$ approximation $\widetilde {\cal A}_\nu$   from Algorithm \ref{t-ARTRR-I} satisfies the error  bound
\[
\begin{array}l
\|{\cal A}-\widetilde {\cal A}_\nu\|_2^2\le \max\limits_{1\le i\le n_3}\Big[ \big(\tilde \sigma^{(i)}_{s_i,\rho_{s_i}+1}\big)^2+   (\tilde\gamma_{s_i,\rho_{s_i}}^{(i)})^{4q}
{\sf C}_\delta^2\sum\limits_{j>b}(\tilde \sigma^{(i)}_{j})^2\Big],\\[8pt]
\|{\cal A}-\widetilde {\cal A}_\nu\|_F^2\le {1\over n_3}\sum\limits_{i=1}^{n_3}\Big[\sum\limits_{j>\rho_{s_i}}(\tilde \sigma^{(i)}_{s_i,j})^2+(\tilde\gamma_{\rho_{s_i}}^{(i)})^{4q}{\sf C}_\delta^2\sum\limits_{j>b}(\tilde \sigma^{(i)}_{s_i,j})^2\Big],
\end{array}
\]
with   probability at least $1-\delta$, where  ${\sf C}_\delta:={\cal C}_{n_2,b, b}^\delta$ is
 defined in \eqref{prob}, and
 \[
\tilde\gamma_{s_i,\rho_{s_i}}^{(i)}={\tilde\sigma_{s_i,b+1}^{(i)}\over \tilde\sigma_{s_i,\rho_{s_i}}^{(i)} }\le {\widehat\sigma_{bs_i+1}^{(i)}+\varepsilon_{s_i-1}^{(i)} \|\widehat {\cal A}^{(i)}\|_2\over \widehat \sigma_{k_i}^{(i)}-\varepsilon_{s_i-1}^{(i)} \|\widehat {\cal  A}^{(i)}\|_2}<1.
 \]
 \end{theorem}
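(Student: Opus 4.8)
\noindent\emph{Proof plan.} The plan is to follow the template of Theorem~\ref{thm:2.3}: pass to the Fourier domain, reduce the tensor error to a frontal-slice matrix error, and invoke Corollary~\ref{cor:2.2} on each slice, the genuinely new feature being that the last block of the computed basis may carry fewer than $b$ columns. First I would set ${\cal H}={\sf fft}_3(\widetilde{\cal A}_\nu)$; by Line~18 and the conjugate-symmetry relations of Algorithm~\ref{t-ARTRR-I}, ${\cal H}^{(i)}=Q^{(i)}Q^{(i)^H}\widehat{\cal A}^{(i)}$ for $i\le\lceil (n_3+1)/2\rceil$ and the remaining slices are mirror images, so by Definition~\ref{def:2.2}, exactly as in \eqref{er},
\[
\|{\cal A}-\widetilde{\cal A}_\nu\|_2^2=\max_{1\le i\le n_3}\|\widehat{\cal A}^{(i)}-Q^{(i)}Q^{(i)^H}\widehat{\cal A}^{(i)}\|_2^2,\qquad
\|{\cal A}-\widetilde{\cal A}_\nu\|_F^2=\frac1{n_3}\sum_{i=1}^{n_3}\|\widehat{\cal A}^{(i)}-Q^{(i)}Q^{(i)^H}\widehat{\cal A}^{(i)}\|_F^2 ,
\]
so it suffices to bound the slice residual for each $i\le\lceil (n_3+1)/2\rceil$.

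\emph{The crux: a slice-wise error identity.} Fix such an $i$ and write $Q^{(i)}=[\,Q^{(i)}_{[s_i-1]}\ \ Q^{(i)}_{s_i}\,]$ as in \eqref{Qi}. Line~17, executed also on the truncated block, re-orthogonalizes $Q^{(i)}_{s_i}$ against $Q^{(i)}_{[s_i-1]}$ while keeping it in ${\rm Ran}(\tilde Q^{(i)}_{s_i})$, so the columns of $[\,Q^{(i)}_{[s_i-1]}\ \ Q^{(i)}_{s_i}\,]$ are orthonormal and $Q^{(i)}Q^{(i)^H}=Q^{(i)}_{[s_i-1]}Q^{(i)^H}_{[s_i-1]}+Q^{(i)}_{s_i}Q^{(i)^H}_{s_i}$. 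Combining $\tilde A^{(i)}_{s_i}=(I-Q^{(i)}_{[s_i-1]}Q^{(i)^H}_{[s_i-1]})\widehat{\cal A}^{(i)}$ with the identity $\tilde Q^{(i)^H}_{s_i}\tilde A^{(i)}_{s_i}=\tilde Q^{(i)^H}_{s_i}\widehat{\cal A}^{(i)}$ noted after \eqref{eq4.2} gives $\widehat{\cal A}^{(i)}-Q^{(i)}Q^{(i)^H}\widehat{\cal A}^{(i)}=\tilde A^{(i)}_{s_i}-Q^{(i)}_{s_i}Q^{(i)^H}_{s_i}\widehat{\cal A}^{(i)}$. Inserting the SVD $\tilde Q^{(i)^H}_{s_i}\widehat{\cal A}^{(i)}=\tilde Z^{(i)}_{s_i}\tilde S^{(i)}_{s_i}\tilde V^{(i)^H}_{s_i}$ of Line~12 and $Q^{(i)}_{s_i}=\tilde Q^{(i)}_{s_i}\tilde Z^{(i)}_{s_i}(:,1{:}\rho_{s_i})$, I would then check that $Q^{(i)}_{s_i}Q^{(i)^H}_{s_i}\widehat{\cal A}^{(i)}=\tilde Q^{(i)}_{s_i}\big(\tilde Q^{(i)^H}_{s_i}\tilde A^{(i)}_{s_i}\big)_{\rho_{s_i}}$, i.e.\ the truncated block produces exactly the optimal rank-$\rho_{s_i}$ projection of $\tilde A^{(i)}_{s_i}$ onto ${\rm Ran}(\tilde Q^{(i)}_{s_i})$. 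Hence
\[
\widehat{\cal A}^{(i)}-Q^{(i)}Q^{(i)^H}\widehat{\cal A}^{(i)}=\tilde A^{(i)}_{s_i}-\tilde Q^{(i)}_{s_i}\big(\tilde Q^{(i)^H}_{s_i}\tilde A^{(i)}_{s_i}\big)_{\rho_{s_i}} .
\]

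\emph{Applying the matrix estimate.} Unwinding the deflated power loop of Lines~6--10 (using that $I-Q^{(i)}_{[s_i-1]}Q^{(i)^H}_{[s_i-1]}$ is idempotent) shows $\tilde Q^{(i)}_{s_i}={\sf orth}\big((\bar M\bar M^H)^q\bar M\,G^{(i)}_{s_i}\big)$ with $\bar M:=\tilde A^{(i)}_{s_i}$, which is precisely the factor produced by Algorithm~\ref{rQB} applied to $\bar M$ with the $n_2\times b$ Gaussian $G^{(i)}_{s_i}$, target rank $b$, and power $q$; moreover $G^{(i)}_{s_i}$ is independent of $\bar M$ (a function of $G^{(i)}_1,\dots,G^{(i)}_{s_i-1}$ only), so Corollary~\ref{cor:2.2} applies conditionally on the earlier draws, hence unconditionally. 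Using $1\le\rho_{s_i}\le b$ and $\rho_{s_i}\le{\rm rank}(\bar M)$ (as $\tilde\sigma_{s_i,\rho_{s_i}}^{(i)}\ge\widehat\sigma_{k_i}^{(i)}-\varepsilon_{s_i-1}^{(i)}\|\widehat{\cal A}^{(i)}\|_2>0$ by \eqref{sv} and the gap hypothesis), Corollary~\ref{cor:2.2}---applied to $\bar M$ with target rank taken as $b$ and at the index $\rho_{s_i}$, so that its constant is ${\cal C}_{n_2,b,b}^\delta={\sf C}_\delta$ and its ratio is $\tilde\sigma_{s_i,b+1}^{(i)}/\tilde\sigma_{s_i,\rho_{s_i}}^{(i)}=\tilde\gamma_{s_i,\rho_{s_i}}^{(i)}$---gives, with probability at least $1-\delta$,
\[
\|\widehat{\cal A}^{(i)}-Q^{(i)}Q^{(i)^H}\widehat{\cal A}^{(i)}\|_2^2\le(\tilde\sigma_{s_i,\rho_{s_i}+1}^{(i)})^2+(\tilde\gamma_{s_i,\rho_{s_i}}^{(i)})^{4q}{\sf C}_\delta^2\sum_{j>b}(\tilde\sigma_{s_i,j}^{(i)})^2 ,
\]
\[
\|\widehat{\cal A}^{(i)}-Q^{(i)}Q^{(i)^H}\widehat{\cal A}^{(i)}\|_F^2\le\sum_{j>\rho_{s_i}}(\tilde\sigma_{s_i,j}^{(i)})^2+(\tilde\gamma_{s_i,\rho_{s_i}}^{(i)})^{4q}{\sf C}_\delta^2\sum_{j>b}(\tilde\sigma_{s_i,j}^{(i)})^2 .
\]
Substituting into the two displays above---and, if a simultaneous statement is wanted, absorbing a union bound over the $\lceil (n_3+1)/2\rceil$ independent draws into $\delta$---gives the asserted tensor bounds. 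For the estimate on $\tilde\gamma_{s_i,\rho_{s_i}}^{(i)}$, use \eqref{sv} with $\ell=s_i$: $j=b+1$ gives $\tilde\sigma_{s_i,b+1}^{(i)}\le\widehat\sigma_{bs_i+1}^{(i)}+\varepsilon_{s_i-1}^{(i)}\|\widehat{\cal A}^{(i)}\|_2$, and $j=\rho_{s_i}$ (so $b(s_i-1)+j=k_i$) gives $\tilde\sigma_{s_i,\rho_{s_i}}^{(i)}\ge\widehat\sigma_{k_i}^{(i)}-\varepsilon_{s_i-1}^{(i)}\|\widehat{\cal A}^{(i)}\|_2$; dividing and invoking $\widehat\sigma_{k_i}^{(i)}-\widehat\sigma_{bs_i+1}^{(i)}>2\varepsilon_{s_i-1}^{(i)}\|\widehat{\cal A}^{(i)}\|_2$ yields both the displayed bound and $\tilde\gamma_{s_i,\rho_{s_i}}^{(i)}<1$.

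I expect the main obstacle to be the slice-wise error identity: since the computed rank $k_i=b(s_i-1)+\rho_{s_i}$ need not be a multiple of $b$---the case excluded in \cite{liu}---the last block $Q^{(i)}_{s_i}$ has only $\rho_{s_i}<b$ columns, and one must verify carefully (via the Rayleigh--Ritz step of Line~12 and the re-orthogonalization of Line~17) that it nonetheless realizes the \emph{optimal} rank-$\rho_{s_i}$ projection of $\tilde A^{(i)}_{s_i}$ inside ${\rm Ran}(\tilde Q^{(i)}_{s_i})$---this is exactly what lets Corollary~\ref{cor:2.2} be invoked at index $\rho_{s_i}$ against a $b$-column basis. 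Two minor points are routine: the no-oversampling regime $\ell=k=b$, which is harmless because Corollary~\ref{cor:2.2}'s high-probability conclusion rests only on $\|G_{n-k}G_k^\dag\|_2\le{\cal C}_{n_2,b,b}^\delta$ (the constant \eqref{prob} being finite at $\ell=k$); and the rank-deficient case ${\rm rank}(\tilde A^{(i)}_{s_i})<b$, where the tail $\sum_{j>b}(\tilde\sigma_{s_i,j}^{(i)})^2$ and $\tilde\gamma_{s_i,\rho_{s_i}}^{(i)}$ simply vanish.
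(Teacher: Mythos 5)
Your proposal is correct and follows essentially the same route as the paper's proof: reduce to frontal slices in the Fourier domain via \eqref{er}, observe that by block orthogonality $(I-Q^{(i)}Q^{(i)^H})\widehat{\cal A}^{(i)}=(I-Q_{s_i}Q_{s_i}^H)\tilde A_{s_i}^{(i)}$ so the error is governed by an RSVD of $\tilde A_{s_i}^{(i)}$ without oversampling, invoke Corollary \ref{cor:2.2} at index $\rho_{s_i}$ with constant ${\cal C}_{n_2,b,b}^\delta$, and bound $\tilde\gamma_{s_i,\rho_{s_i}}^{(i)}$ from \eqref{sv} and the gap hypothesis. The only divergence is bookkeeping of the probability across the $n_3$ slices, where you propose a union bound while the paper asserts the single $1-\delta$ guarantee directly; your treatment of the truncated last block's optimal rank-$\rho_{s_i}$ projection is simply a more explicit version of what the paper leaves implicit.
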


\begin{proof}
Let ${\cal H}={\sf fft}_3(\widetilde {\cal A}_\nu)$. It should be noted that the relation in \eqref{er} still holds,
in which   $\widehat {\cal H}^{(i)}=Q^{(i)}Q^{(i)^H}\widehat {\cal A}^{(i)}$ is the rank-$k_i$ approximation to $\widehat {\cal A}^{(i)}$, within the range of $Q^{(i)}$. Notably,  the last subblock $Q_{s_i}^{(i)}$ of $Q^{(i)}$  is obtained by applying RSVD (Algorithm \ref{rQB}) on the matrix $\tilde A_{s_i}^{(i)}$ without oversampling, and thus we get from Corollary \ref{cor:2.2} that
\[
\begin{array}l
\|(I-Q_{s_i}Q_{s_i}^H)\tilde A_{s_i}^{(i)}\|_2^2\le \big(\tilde \sigma^{(i)}_{s_i,\rho_{s_i}+1}\big)^2+   (\tilde\gamma_{s_i,\rho_{s_i}}^{(i)})^{4q}
{\sf C}_\delta^2\sum\limits_{j>b}(\tilde \sigma^{(i)}_{j})^2,\\
\|(I-Q_{s_i}Q_{s_i}^H)\tilde A_{s_i}^{(i)}\|_F^2\le \sum\limits_{j>\rho_{s_i}}(\tilde \sigma^{(i)}_{s_i,j})^2+(\tilde\gamma_{\rho_{s_i}}^{(i)})^{4q}{\sf C}_\delta^2\sum\limits_{j>b}(\tilde \sigma^{(i)}_{s_i,j})^2,
\end{array}
\]
where   by the orthogonality between $Q_{s_i}$ and $Q_{[s_i-1]}$, we get
\[
\begin{array}{rl}
(I-Q_{s_i}Q_{s_i}^H)\tilde A_{s_i}^{(i)}&=(I-Q_{s_i}Q_{s_i}^H)(I-Q_{[s_i-1]}Q_{[s_i-1]}^H)\widehat  {\cal A}^{(i)}\\
&=(I-Q^{(i)}Q^{(i)^H})\widehat {\cal A}^{(i)}.
\end{array}
\]
 Notably, the random Gaussian matrix $G_{s_i}^{(i)}=\Big[{G_{s_i,b}^{(i)}\atop G_{s_i,-b}^{(i)}}\Big]{b\atop n_2-b}$ used for computing $\tilde Q_{s_i}^{(i)}$ satisfies  $\Gamma_i:=\|G_{s_i,-b}^{(i)}(G_{s_i,b}^{(i)})^\dag\|_2\le {\sf C}_\delta$ with probability at least $1-\delta$. When we vary $i$ from 1 to $n_3$, the probability of all $n_3$ random Gaussian matrices satisfies  the same upper bound ($\Gamma_i\le {\sf C}_\delta$ for all $i$) remains $1-\delta$. This is because the symmetry enforces a uniform probabilistic guarantee across the entire collection, overriding the product rule for independent events in this specific setting.
We therefore get the estimates for $\|{\cal A}-\widetilde {\cal A}_\nu\|_2$ and $\|{\cal A}-\widetilde {\cal A}_\nu\|_F$,  and
the upper bound for
$\tilde\gamma_{s_i,\rho_{s_i}}^{(i)}$ follows from \eqref{sv}.
\end{proof}

\begin{theorem} \label{thm:3.2} Adopting the notation and the results  from \eqref{eq4.3}-\eqref{sv},  let $\sigma_j(\tilde Q_l^{(i)^H}\widehat {\cal A}^{(i)})$ (where $1\le i\le n_3, 1\le l\le s_i$, $1\le j\le b$) be the  $(b(l-1)+j)$-th diagonal entry of the $i$-th frontal slice $(\widehat {\cal S}^{est})^{(i)}$ of an  $f$-diagonal tensor $\widehat {\cal S}^{est}$. Define ${\cal S}^{est}={\sf ifft}_3(\widehat{\cal S}^{est})$. Then $ {\cal S}^{est}$ 
can be interpreted as the estimated singular-value tensor
 ${\cal S}$ corresponding to   ${\cal A}$, and their singular value tube fibers satisfy  the error bound
\begin{equation}
\|{\bf s}_{b(l-1)+j}^{\rm {\it est},tub}-{\bf s}_{b(l-1)+j}^{\rm tub}\|_2^2\le 2\tau_{j,{\rm max}}^2\|{\bf s}_{b(l-1)+j}^{\rm tub}\|_2^2+2\tau_\varepsilon^2\|{\cal A}\|_2^2,\label{tub1}
\end{equation}
where   $\tau_{j,{\rm max}}={1\over 2}\tilde\gamma_{j,\max}^{4q+2}{\sf C}_\delta^2$ with ${\sf C}_\delta={\cal C}_{n_2,b,b}^\delta$ and 
$\tilde\gamma_{j,\max}=\max\limits_{1\le i\le n_3}\tilde\sigma_{l,b+1}^{(i)}/\tilde\sigma_{l,j}^{(i)}$, and $$\tau_\varepsilon=
(1+\tau_{j,\max})\max\limits_{1\le i\le n_3}\varepsilon_{l-1}^{(i)}.$$
\end{theorem}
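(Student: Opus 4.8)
The plan is to control, entry by entry in the Fourier domain, the gap between the computed quantity $\sigma_j(\tilde Q_l^{(i)^H}\widehat{\cal A}^{(i)})$ and the true singular value $\widehat\sigma_{b(l-1)+j}^{(i)}$ of the slice $\widehat{\cal A}^{(i)}$, and then to pass from these slicewise estimates to the tube-fiber bound \eqref{tub1} via the Parseval-type identities of Definition \ref{def:2.2}. As in Theorem \ref{thm:4.2}, the bound is to be read as holding with probability at least $1-\delta$. First I would reduce $\sigma_j(\tilde Q_l^{(i)^H}\widehat{\cal A}^{(i)})$ to a matrix randomized-SVD singular-value estimate: since $\tilde Q_l^{(i)}$ is orthogonal to $Q_{[l-1]}^{(i)}$ by construction, $\tilde Q_l^{(i)^H}\widehat{\cal A}^{(i)}=\tilde Q_l^{(i)^H}\tilde A_l^{(i)}$, and $\tilde Q_l^{(i)}$ is precisely the orthonormal range basis returned by Algorithm \ref{rQB} (block size $b$, no oversampling, power parameter $q$) applied to the deflated slice $\tilde A_l^{(i)}$. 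Hence $\sigma_j(\tilde Q_l^{(i)^H}\widehat{\cal A}^{(i)})$ is the estimated $j$-th singular value of $\tilde A_l^{(i)}$, so Corollary \ref{cor:2.2} (third inequality, with $n=n_2$, $k=\ell=b$, whence ${\sf C}_\delta={\cal C}_{n_2,b,b}^\delta$) gives
\[
\bigl|\sigma_j(\tilde Q_l^{(i)^H}\widehat{\cal A}^{(i)})-\tilde\sigma_{l,j}^{(i)}\bigr|\le\tau_{l,j}^{(i)}\,\tilde\sigma_{l,j}^{(i)},\qquad
\tau_{l,j}^{(i)}:=\tfrac12\bigl(\tilde\sigma_{l,b+1}^{(i)}/\tilde\sigma_{l,j}^{(i)}\bigr)^{4q+2}{\sf C}_\delta^2 .
\]

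Next I would feed in \eqref{sv}, which supplies both $|\tilde\sigma_{l,j}^{(i)}-\widehat\sigma_{b(l-1)+j}^{(i)}|\le\varepsilon_{l-1}^{(i)}\|\widehat{\cal A}^{(i)}\|_2$ and, as a consequence, $\tilde\sigma_{l,j}^{(i)}\le\widehat\sigma_{b(l-1)+j}^{(i)}+\varepsilon_{l-1}^{(i)}\|\widehat{\cal A}^{(i)}\|_2$. A triangle inequality combining these with the previous display yields, writing $\widehat\sigma^{\mathrm{est},(i)}_{b(l-1)+j}:=\sigma_j(\tilde Q_l^{(i)^H}\widehat{\cal A}^{(i)})$ for the corresponding diagonal entry of $(\widehat{\cal S}^{\mathrm{est}})^{(i)}$,
\[
\bigl|\widehat\sigma^{\mathrm{est},(i)}_{b(l-1)+j}-\widehat\sigma_{b(l-1)+j}^{(i)}\bigr|\le\tau_{l,j}^{(i)}\,\widehat\sigma_{b(l-1)+j}^{(i)}+(1+\tau_{l,j}^{(i)})\,\varepsilon_{l-1}^{(i)}\,\|\widehat{\cal A}^{(i)}\|_2 .
\]
Bounding $\|\widehat{\cal A}^{(i)}\|_2\le\|\widehat{\bf A}\|_2=\|{\cal A}\|_2$ (the spectral norm of a block-diagonal matrix dominates that of each diagonal block), together with $\tau_{l,j}^{(i)}\le\tau_{j,\max}$ and $(1+\tau_{l,j}^{(i)})\varepsilon_{l-1}^{(i)}\le\tau_\varepsilon$, produces the uniform entrywise estimate $|\widehat\sigma^{\mathrm{est},(i)}_{b(l-1)+j}-\widehat\sigma_{b(l-1)+j}^{(i)}|\le\tau_{j,\max}\widehat\sigma_{b(l-1)+j}^{(i)}+\tau_\varepsilon\|{\cal A}\|_2$.

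To make this valid for all $i=1,\dots,n_3$ simultaneously I would invoke the conjugate-symmetry device already used in the proof of Theorem \ref{thm:4.2}: the events $\{\|G_{l,-b}^{(i)}(G_{l,b}^{(i)})^{\dagger}\|_2\le{\sf C}_\delta\}$ governing the $n_3$ Gaussian matrices are linked across the conjugate pairs $i\leftrightarrow n_3-i+2$, so the joint guarantee retains probability $1-\delta$ without a union bound; the same conjugation also shows $(\widehat{\cal S}^{\mathrm{est}})^{(i)}$ is conjugate-symmetric in $i$ — conjugating a matrix leaves its singular values unchanged — so ${\cal S}^{\mathrm{est}}={\sf ifft}_3(\widehat{\cal S}^{\mathrm{est}})$ is real and is legitimately interpreted as an estimate of the singular-value tensor ${\cal S}$. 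Finally, the $(b(l-1)+j)$-th singular-value tube fibers of ${\cal A}$ and ${\cal S}^{\mathrm{est}}$ have Fourier-domain entries $\widehat\sigma_{b(l-1)+j}^{(i)}$ and $\widehat\sigma^{\mathrm{est},(i)}_{b(l-1)+j}$ respectively, so Definition \ref{def:2.2} gives
\[
\bigl\|{\bf s}^{\mathrm{est},\mathrm{tub}}_{b(l-1)+j}-{\bf s}^{\mathrm{tub}}_{b(l-1)+j}\bigr\|_2^2=\frac1{n_3}\sum_{i=1}^{n_3}\bigl|\widehat\sigma^{\mathrm{est},(i)}_{b(l-1)+j}-\widehat\sigma_{b(l-1)+j}^{(i)}\bigr|^2;
\]
substituting the uniform entrywise bound, applying $(a+b)^2\le2a^2+2b^2$, and using $\frac1{n_3}\sum_{i}(\widehat\sigma_{b(l-1)+j}^{(i)})^2=\|{\bf s}^{\mathrm{tub}}_{b(l-1)+j}\|_2^2$ delivers \eqref{tub1}.

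I expect the main obstacle to be the bookkeeping in the first reduction — verifying that $\sigma_j(\tilde Q_l^{(i)^H}\widehat{\cal A}^{(i)})$ really is an honest matrix-RSVD singular-value estimate of the deflated slice $\tilde A_l^{(i)}$, so that Corollary \ref{cor:2.2} applies with the $\gamma$-factor $\tilde\sigma_{l,b+1}^{(i)}/\tilde\sigma_{l,j}^{(i)}$ matching the definition of $\tilde\gamma_{j,\max}$ — together with the probabilistic step of promoting the per-slice guarantee to a single event valid across all $n_3$ slices; once those are in place, the remaining triangle-inequality and Parseval manipulations above are routine.
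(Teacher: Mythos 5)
Your proposal reproduces the paper's own argument: apply the singular-value estimate of Corollary \ref{cor:2.2} (with $k=\ell=b$) to the deflated slice $\tilde A_l^{(i)}$ using $\tilde Q_l^{(i)^H}\widehat{\cal A}^{(i)}=\tilde Q_l^{(i)^H}\tilde A_l^{(i)}$, combine with \eqref{sv} via the triangle inequality and $\tilde\sigma_{l,j}^{(i)}\le\widehat\sigma_{b(l-1)+j}^{(i)}+\varepsilon_{l-1}^{(i)}\|\widehat{\cal A}^{(i)}\|_2$, then pass to tube fibers by the Fourier-domain norm identity and $(a+b)^2\le 2a^2+2b^2$, exactly as in the paper (which also reuses the same probabilistic device across slices from Theorem \ref{thm:4.2}). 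The proof is correct and essentially identical to the paper's.
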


\begin{proof} Recall that  $\tilde\sigma_{l,j}^{(i)}$ denotes the $j$-th singular value of 
$\tilde A_l^{(i)}$. By Corollary \ref{cor:2.2}, for $1\le j\le b$,  we 
get  
\[
|\sigma_j(\tilde Q_{l}^{(i)^H}\tilde {A}_{l}^{(i)})-\tilde\sigma_{l,j}^{(i)}|=|\sigma_j(Q_{l}^{(i)^H}\widehat {\cal A}^{(i)})-\tilde\sigma_{l,j}^{(i)}|\le {1\over 2}\tilde\gamma_j^{4q+2}{\sf C}_\delta^2\tilde\sigma_{l,j}^{(i)}.
\]
Combining it and the triangular inequality with \eqref{sv}, we observe
\[
|\sigma_j(Q_{l}^{(i)^H}\widehat {\cal A}^{(i)})-\widehat\sigma_{b(l-1)+j}^{(i)}|\le {1\over 2}\tilde\gamma_j^{4q+2}{\sf C}_\delta^2\tilde\sigma_{l,j}^{(i)}+\varepsilon_{l-1}^{(i)}\|\widehat{\cal A}^{(i)}\|_2\le 
\tau_{j,\max}\tilde\sigma_{l,j}^{(i)}+\varepsilon_{l-1}^{(i)}\|\widehat{\cal A}^{(i)}\|_2, 
\]
where $ \tilde\sigma_{l,j}^{(i)}\le \widehat\sigma_{b(l-1)+j}^{(i)}+\varepsilon_{l-1}^{(i)}\|\widehat{\cal A}^{(i)}\|_2$. Thus giving
\[
|\sigma_j(\tilde Q_{l}^{(i)^H}\widehat {\cal A}^{(i)})-\widehat\sigma_{b(l-1)+j}^{(i)}|\le 
\tau_{j,\max}\widehat\sigma_{b(l-1)+j}^{(i)}+(1+\tau_{j,\max})\varepsilon_{l-1}^{(i)}\|\widehat{\cal A}^{(i)}\|_2.
\]
With a similar argument to proving Theorem \ref{thm:2.3}, we get
\begin{align*}
&\|{\bf s}_{b(l-1)+j}^{\rm {\it est},tub}-{\bf s}_{b(l-1)+j}^{\rm tub}\|_2^2\le {1\over  {n_3}}\sum\limits_{i=1}^{n_3}
(\sigma_j(\tilde Q_{l}^{(i)^H}\widehat {\cal A}^{(i)})-\widehat\sigma_{b(l-1)+j}^{(i)})^2\\
&\le 2\tau_{j,\max}^2\Big({1\over n_3}\sum\limits_{i=1}^{n_3}(\widehat\sigma_{b(l-1)+j}^{(i)})^2\Big)+2\tau_\varepsilon^2\Big({1\over  {n_3}}\sum\limits_{i=1}^{n_3}\|\widehat{\cal A}^{(i)}\|_2^2\Big)\\
&\le 2\tau_{j,\max}^2\|{\bf s}_{b(l-1)+j}^{\rm tub}\|_2^2+2\tau_{\varepsilon}^2\|{\cal A}\|_2^2,\notag
\end{align*}
where in the second inequality, we used the fact that $(x+y)^2\le 2x^2+2y^2$.
\end{proof}

In \cite{liu}, for $q=0$, \(\varepsilon_{l-1}^{(i)}=O({\widehat \sigma_{b(l-1)+1}^{(i)}{\sf C}_\delta'/\widehat \sigma_{b(l-1)}^{(i)}})\)  was estimated, where ${\sf C}_\delta'={\cal C}^\delta_{n_2,b(l-1),b(l-1)}$.  While for $q>0$ we cannot quantify the magnitude of \(\varepsilon_{l-1}^{(i)}\), the term \(\tau_{j,\max}\) within the estimated bound likely ensures that the singular value tube fiber with a smaller index $j$ exhibits a more accurate approximation error. 
This will be  verified in later experimental results.

\section{Numerical experiments}

In this section, we first evaluate the effectiveness  of the proposed algorithm using synthetic tensors.
Then we use the real-world data to examine the performance of TRPCA  via the
applications in computer vision. All the experiments are conducted on a personal laptop with
Intel(R) Core(TM) i5-7200U CPU(2.50GHz) and 8.00 GB RAM, and all the algorithms have been implemented in the MATLAB
R2017b environment and Tensor-Tensor Product Toolbox \cite{lu}.   

\subsection{Synthetic Data}
In this subsection, we present synthetic tensors to test feasibility of
the proposed algorithm in evaluating the numerical tubal rank and approximate tensor.

Let ${\cal A}\in {\mathbb R}^{400\times 400\times 50}$. For $i=1,2,\ldots, 50$, 
let ${\bf U}^{(i)}, {\bf V}^{(i)}\in {\mathbb R}^{400\times 400}$  be random orthogonal matrices.
We define
two types of tensors:

\begin{itemize}
\item Tensor I:   ${\cal A}={\sf ifft}_3(\widehat {\cal A})$, where the frontal slice   $\widehat {\cal A}^{(i)}$ is defined as \cite{kc}: $\widehat {\cal A}^{(i)}={\bf U}^{(i)}\Sigma_1 {\bf V}^{(i)^T}$  with $e^{-k/6}$ as the $k$-th ($1\le k\le 15)$ diagonal entry of the diagonal matrix ${ \Sigma}_1$, and $e^{-k/2}$ otherwise. The neighbouring  ratio of the first 15 diagonal entries is about 0.85, and it decreases to $4\times 10^{-3}$ for $\widehat \sigma_{16}^{(i)}/\widehat \sigma_{15}^{(i)}$, and then keeps 0.61 in the later neighbouring diagonal entries.

\item Tensor II:  ${\cal A}^{(i)}={\bf U}^{(i)}\Sigma_2 {\bf V}^{(i)^T}$, where the $k$-th diagonal entry of ${  \Sigma}_2$ is $2^{-k}$.  It has fast decaying rate in the singular values  of its frontal slices, while  this  does not necessarily guarantee the singular values of   $\widehat {\cal A}^{(i)}$ to have fast decaying rate. Actually, the initial neighbouring singular value ratio of $\widehat{\cal A}^{(i)}$ is about 0.95, and the sequent neighbouring singular value ratio is up to 0.98.
\end{itemize}

\begin{figure}
\centering
\includegraphics[width=0.9\textwidth,height=5cm]{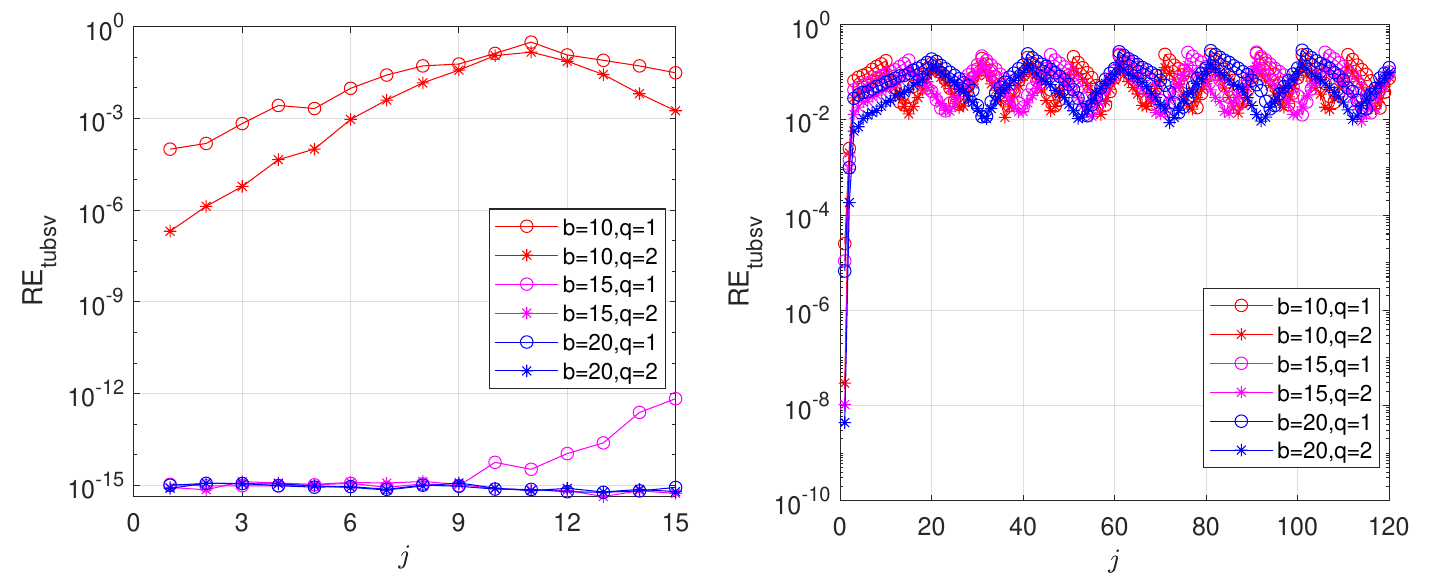}
\caption{\it The relative errors of  computed singular value tube fibers  for different $b$ and $q$. The left figure corresponds to   Tensor I and the right is for   tensor II.} \label{Fig5.1}
 \end{figure}

\begin{figure}
\centering
\includegraphics[width=0.9\textwidth,height=8cm]{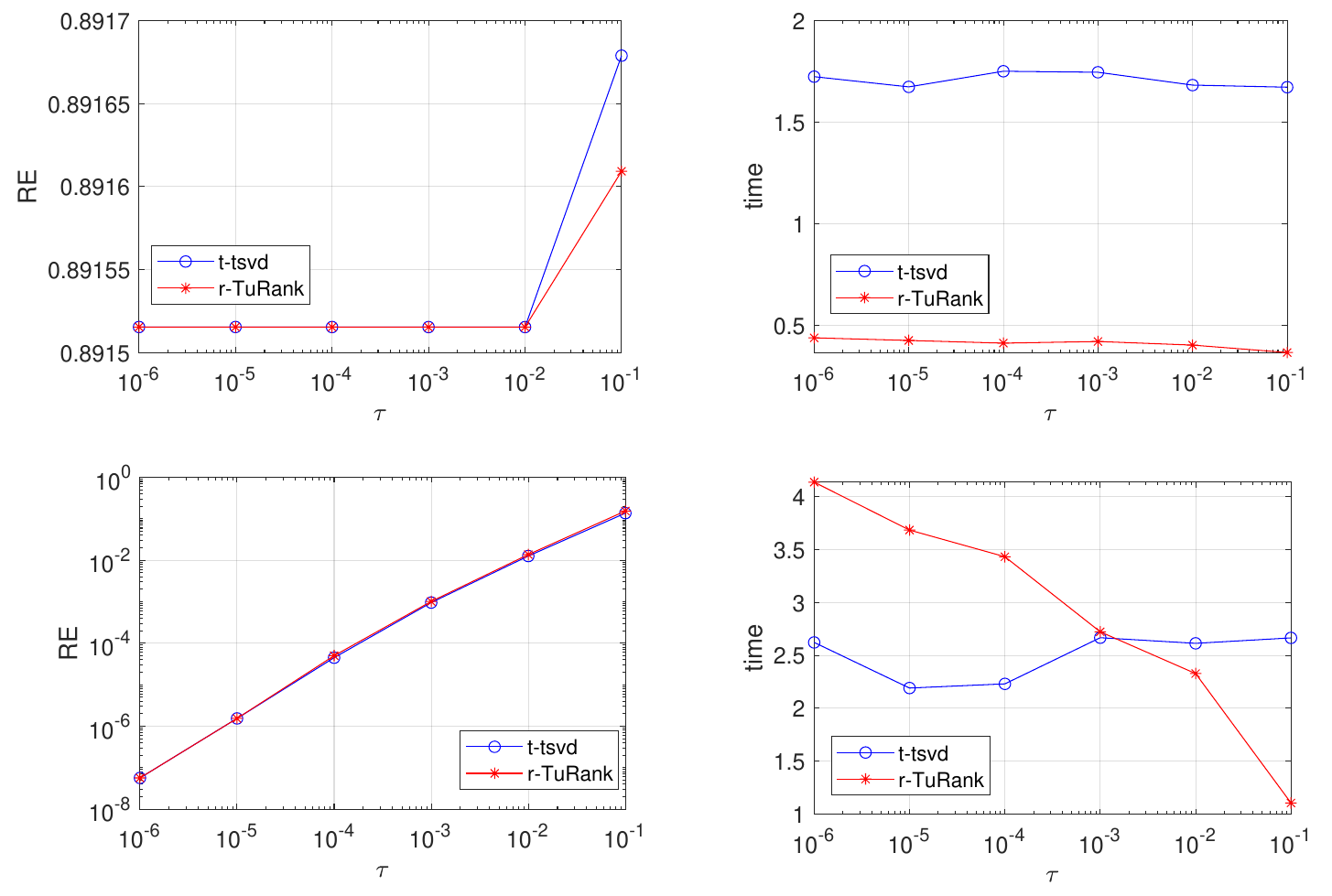}
\caption{\it The Frobenius relative errors of  approximate tensor  for $b=10$ and $q=1$. The top row corresponds to relative error and running  time in seconds for   Tensor I and the bottom is for  tensor II.} \label{Fig5.2}
 \end{figure}

In Fig. \ref{Fig5.1}  we  set the threshold $\tau$ to be $\tau=5\times 10^{-2}$, and  test the relative accuracy 
\[
{\rm RE}_{\rm tubsv}(j)={\displaystyle  \|{\bf s}_j^{est,{\rm tub}}-{\bf s}_j^{\rm tub}\|_2 \over \displaystyle \|{\bf s}_j^{\rm tub}\|_2},\quad  1\le j\le\nu,
\]
of the estimated singular value tube fibers computed from the proposed algorithm.   It is observed that
for fixed $b$,
 the estimated singular value tube fibers ${\bf s}^{est,{\rm tub}}_j$  of Tensor I have better accuracy with a big $q$.
 For fixed $q$,   high accuracy is achieved  when $b$ is taken as 15 or  20, while for $b=10$, the accuracy decreases.
 Moreover, the smaller the index ($j$, where $1\le j\le b$) of the tuber fiber, the more accurate   the relative error.  This coincides with the first term of the estimate \eqref{tub1}. For $b=15$ or $20$, there is a significant big gap between $\widehat\sigma_{15}^{(i)}$
  and $\widehat\sigma_{16}^{(i)}$, while for $b=10$, the first 15 adjacent singular values of $\widehat {\cal S}^{(i)}$ increase up to 0.85,
  this affects  a high-accuracy of the estimation of the singular values. 
  The accuracy improves  when $b\le j\le 2b$, partly due to the singular value ratio $\widehat \sigma_{j+1}^{(i)}/\widehat \sigma_{j}^{(i)}$ decreases to 0.61.

 For Tensor II, it is also observed that the first few singular value tube fibers are of the precision $10^{-8}$, and the accuracy decreases gradually and oscillates between $10^{-2}$ and $10^{-1}$ in the sequent singular value tube fibers. This oscillation is also justified by the estimates in \eqref{tub1}, where  it is seen that the estimated singular value tube fibers have better accuracy for small $j$, and when the index $j$ increases to a multiple of $b$, the accuracy will decrease.

In Fig. \ref{Fig5.2}, we take $b=10, q=1$ and vary the threshold $\tau$  from $10^{-6}$ to $10^{-1}$. we compare the Frobenius relative error and efficiency of the computed approximate tensor $\widetilde{\cal A}$ via the truncated t-SVD ({\sf t-tsvd}) and {\sf r-TuRank} algorithm, respectively, in which
\begin{equation}
{\rm RE}={\|\widetilde{\cal A}_\nu-{\cal A}\|_F/ \|{\cal A}\|_F}.\notag
\end{equation}
The results presented show that the proposed {\sf r-TuRank} algorithm is comparable in accuracy to the truncated t-SVD across different values of \(\tau\). Moreover, for the approximation problems involving fast-decaying Fourier-domain singular value tube fibers, the proposed algorithm exhibits higher efficiency.

\begin{figure}
\centering
\includegraphics[width=0.6\textwidth,height=6cm]{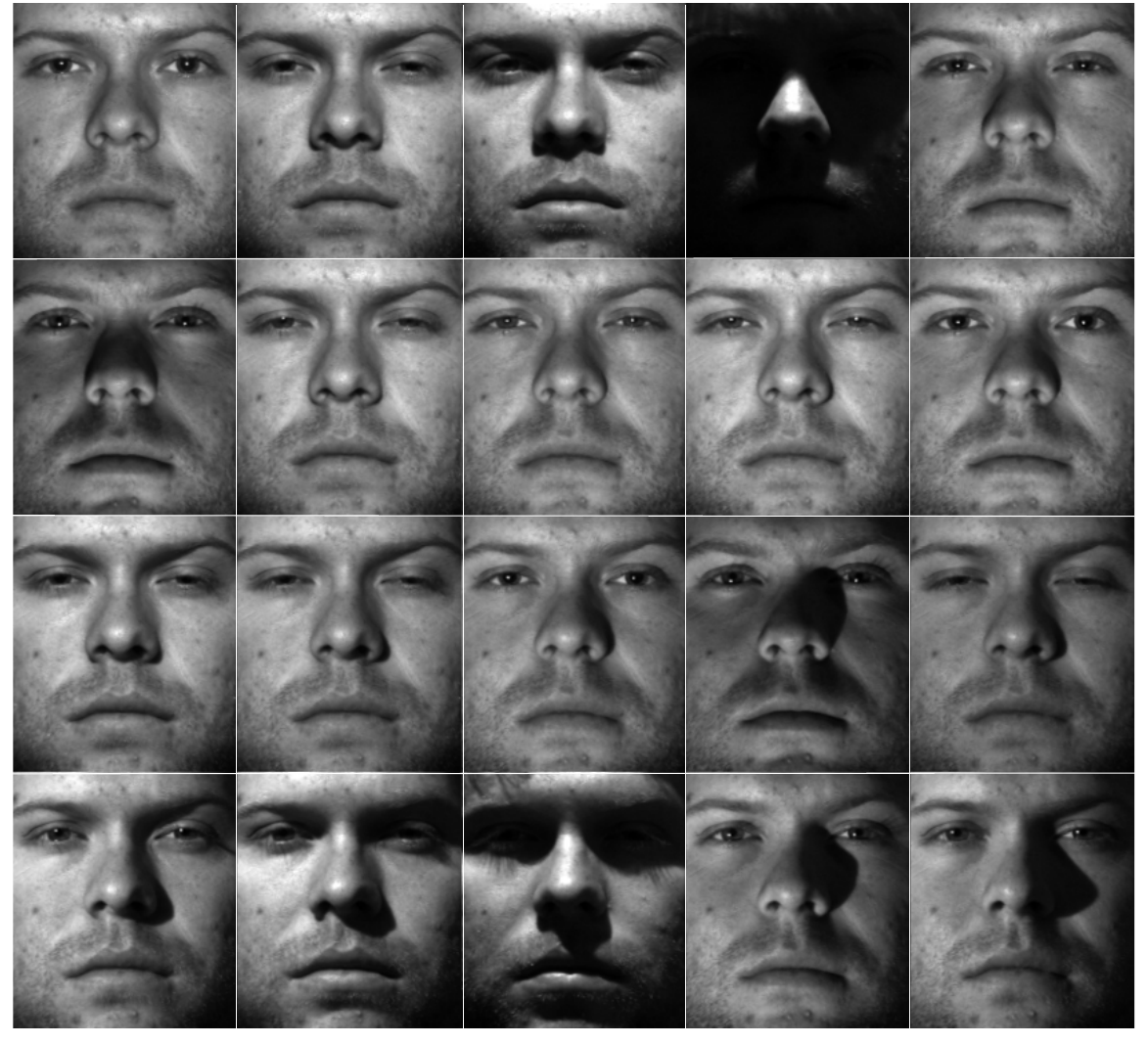}
\caption{\it Sample images for one individual. } \label{Fig5.30}
 \end{figure}

\begin{figure}
\centering
\includegraphics[width=0.8\textwidth,height=9cm]{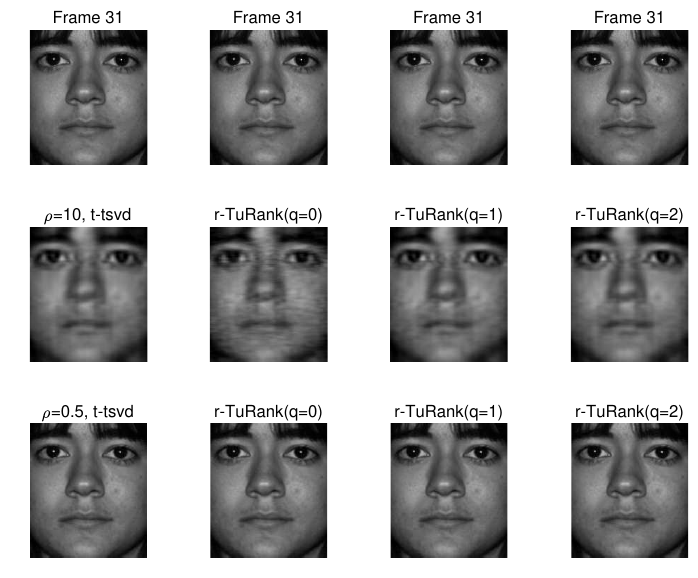}
\caption{\it The quality of compression for Frame 31 chosen from Extended Yale B dataset with $b=20, \tau={1\over n_3}\rho{\|{\cal A}\|_F}$ and $\rho=10, 0.5$.  } \label{Fig5.3}
 \end{figure}

\begin{figure}
\centering
\includegraphics[width=\textwidth,height=4.2cm]{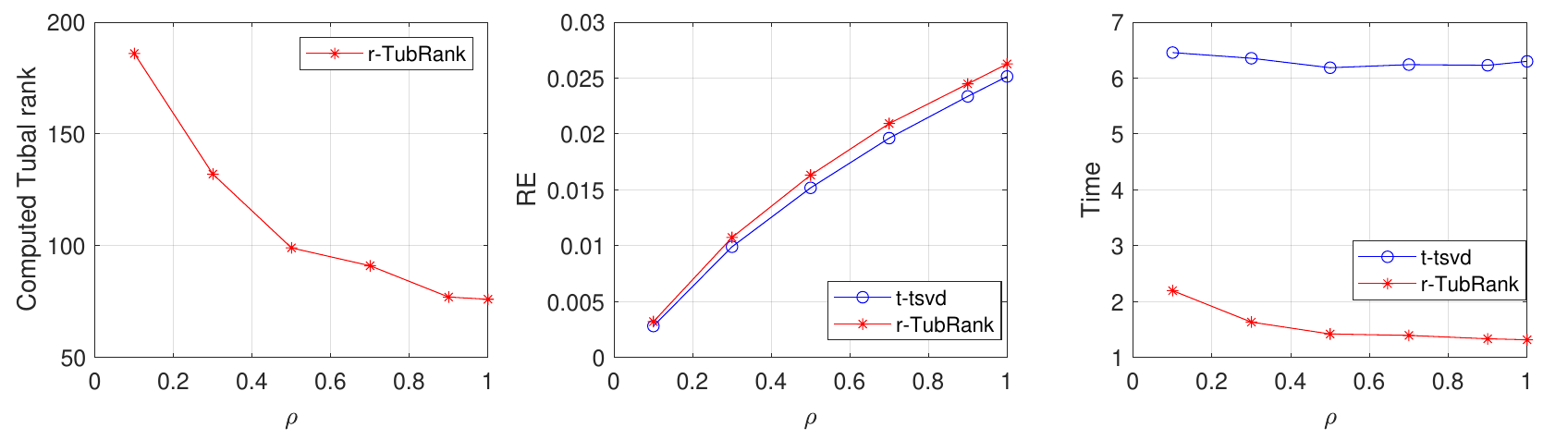}
\caption{\it Relative errors and running time comparison for computing  approximate tensor  for $b=20$ and $q=0$. } \label{Fig5.4}
 \end{figure}

\subsection{Tensor compression on the Extended Yale B data set}

We apply the  t-SVD and  {\sf r-TuRank} algorithm on the Extended Yale B data set to evaluate the accuracy of the proposed low-rank representations,
as well as its efficiency. The database consists of 2,432 images of 38 individuals, each
under 64 different lighting conditions \cite{gbk}. We used 20 images from each individual and kept
them at full resolution. Each image comprises of $192\times 168$ pixels on a grayscale range and the 760 images can be  organized into a $192\times 760\times 168$ tensor, and each lateral slice corresponds to a grayscale image of one individual. Fig. \ref{Fig5.30} depicts the sample images for one individual.

Owing to the spatial correlation within lateral slices, the tensor is expected to exhibit a low tubal rank. We set \(b = 20\) and \(\tau=\frac{1}{n_3}\rho \|{\cal A}\|_F\), where \(\rho\) varies from 0.1 to 1, characterizing specific percentages of the average energy of the frontal slices. By varying \(q = 0, 1, 2\) and employing the {\sf t-tsvd} and {\sf r-TuRank} algorithms to compute the approximate tensor, we present the results in Fig. \ref{Fig5.3}. It can be concluded that: a) in terms of \(\tau\), both {\sf t-tsvd} and {\sf r-TuRank} generate approximate images with a visual effect closer to that of the original image; b) regarding \(q\), there is little discrepancy between the approximate images obtained by {\sf t-tsvd} and {\sf r-TuRank}. This is probably because the compression problem is a low tubal rank approximation problem, and the singular values of \(\widehat {\cal A}^{(i)}\) decay rapidly. In such cases, \(q = 0\) is sufficient to produce an approximate image comparable to that generated by the {\sf t-tsvd} algorithm.

Varying $\rho$ as 0.1, 0.3, $\ldots$, 0.9, 1, we obtain the results presented in Fig. \ref{Fig5.4}. It is observed that the larger the parameter $\tau$, the lower the computed tubal rank. Across all cases of $\tau$, the accuracy of {\sf r-TuRank} is comparable to that of {\sf t-tsvd}, while it exhibits higher efficiency.

\subsection{Background modeling on CDnet-2014 data set}

In this subsection, we consider the background modeling problem
which aims to separate the foreground objects from the background.
The frames of the background of video surveillance  are highly correlated and thus can be modeled as a low tubal rank tensor. The moving foreground
objects occupy only a fraction of image pixels and thus can
be treated as sparse errors.

The background modelling problems needs the tensor robust principal component analysis (TRPCA) \cite{lcl,lcl2,zea} to recover the corrupted low-rank tensors  via  convex optimization  problem \cite{lcl}
\[
\min\limits_{{\cal L, E }}\|{\cal L}\|_\circledast+\lambda \|{\cal E}\|_1,\quad \mbox{subject to}\quad {\cal A}={\cal L}+{\cal E}\in {\mathbb R}^{n_1\times n_2\times n_3}
\]
where $\lambda>0$ is a tuning parameter,  the tensor nuclear norm (TNN) of ${\cal X}$ is defined by $
\|{\cal X}\|_\circledast={1\over n_3}\sum\limits_{k=1}^{n_3}\|\widehat {\cal X}^{(k)}\|_*,$
with $\|\widehat {\cal X}^{(k)}\|_*$ denoting the nuclear norm of its argument, i.e., the sum of singular values of $\widehat {\cal X}^{(k)}$.  The tensor  ${\ell}_1$-norm is the sum of the absolute values of all entries of ${\cal E}$.

   The TRPCA can be solved by  ADMM as the following iteration  \cite{lcl,lcl2}:

1) ${\cal L}_{k+1}=\mathop{\arg\min}\limits_{{\cal L}\in {\mathbb R}^{n_1\times n_2\times n_3}} \|{\cal L}\|_\circledast +{\mu_k\over 2}\|{\cal L}+{\cal E}_k-{\cal A}+{1\over \mu_k}{\cal Y}_k\|_F^2$;

2) ${\cal E}_{k+1}=\mathop{\arg\min}\limits_{{\cal E}\in {\mathbb R}^{n_1\times n_2\times n_3}} \|{\cal E}\|_1 +{\mu_k\over 2}\|{\cal L}_{k+1}+{\cal E}-{\cal A}+{1\over \mu_k}{\cal Y}_k\|_F^2$;

3) ${\cal Y}_{k+1}={\cal Y}_k+\mu_k ({\cal L}_{k+1}+{\cal E}_{k+1}-{\cal A})$;

4) $\mu_{k+1}=\min\{\rho\mu_k, \mu_{\rm max}\}$,\\
where  $\rho\ge 1$ and ${\cal L}_0,{\cal E}_0$ and ${\cal Y}_0$ can be taken as  zero tensors.

The key step  in ADMM for solving TRPCA   requires to solve
\begin{equation}
\min\limits_{{\cal X}\in {\mathbb R}^{n_1\times n_2\times n_3}} \tau \|{\cal X}\|_\circledast+{1\over 2}\|{\cal X}-{\cal Y}\|_F^2,\quad \tau>0,\label{eq4.6}
\end{equation}
whose minimizer ${\cal X}_\diamond$ can be obtained by the tensor Singular Value Thresholding (t-SVT) operator \cite{lcl}
 ${\mathbb S}_{\tau}(\cdot)$ on the t-SVD  of ${\cal Y}$: ${\cal Y}={\cal U}*{\cal S}*{\cal V}^T$,  i.e.,
\begin{equation}
{\cal X}_\diamond={\mathbb S}_\tau({\cal Y}):={\cal U}*{\cal S}_\tau*{\cal V}^T,\quad \mbox{or}\quad \widehat {X}_\diamond=\widehat { U}\widehat { S}_\tau \widehat { V}^T,\notag
\end{equation}
where  ${\cal S}_\tau={\sf ifft}_3({\mathscr S}_\tau(\widehat S))$  with the componentwise soft-thresholding operator ${\mathscr S}_\tau(x)={\rm sign}(x)*\max(|x|-\tau, 0)$.
To avoid the large computational cost, we consider  our {\sf r-TuRank} algorithm to compute
$$
\widehat Y^{(i)}\approx Q^{(i)}Q^{(i)^H}\widehat Y^{(i)}=Q^{(i)}L^{(i)}P^{(i)^H},
$$
where $P^{(i)}L^{(i)^H}$ is the thin QR factorization of $\widehat Y^{(i)^H}Q^{(i)}$. By Theorem \ref{thm:2.2}, we know that
$L^{(i)}$ tends to a diagonal matrix if $\widehat Y^{(i)}$ has singular values with fast decaying rate.  We apply the operator
${\mathscr S}_\tau(\cdot)$  on $L^{(i)}$ to obtain $L^{(i)}_\tau$ and $\widehat Y^{(i)}_\diamond := Q^{(i)}L^{(i)}_\tau P^{(i)^H}$.
The tensor  ${\cal X}_\diamond$  is approximated by    ${\cal X}_\diamond'={\sf ifft}_3(\widehat {\cal Y}_\diamond)$.

We choose 120 frames ($f$) of two color videos {\it pedestrians} ($360\times 240$), {\it streetLight} ($320\times 240$)   in CDnet-2014 \cite{wjp}
and resize the $h\times w$ video  to $150\times 100$, $160\times 120$, respectively. The video is reshaped into an $(hw)\times f\times 3$ tensor whose entries range from 0 to 1.
We solve the corresponding TRPCA model via {\sf r-TuRank} and {\sf t-tsvd}, respectively, in which the parameters $\lambda, \mu_0$ in ADMM  are set to   $\lambda=(3hw)^{-1/2}$, and  $\mu_0=\rho_0\lambda$ with $\rho_0=0.01, 0.05, 0.1$, respectively.  In  the $k$-th step of ADMM iterations, we compute ${\cal L}_{k+1}$ and ${\cal E}_{k+1}$ via \eqref{eq4.6} with
the threshold  $\tau=\mu_{\rm max}^{-1}=\mu_0^{-1}$. The iteration of ADMM stops if the relative error
\[
{\rm RE}=\|{\cal L}_{k+1}+{\cal E}_{k+1}-{\cal X}\|_F/\|{\cal X}\|_F< 10^{-6}.
\]

\begin{table}[]
    \center
    \caption{Comparison of  {\sf r-TuRank} and {\sf t-tsvd}-based ADMM for background modeling on different  videos. }\label{tab5.1}
    \resizebox{\textwidth}{!}{
\begin{tabular}{|lllllll|l|llllll|}
\hline
\multicolumn{7}{|c|}{\it pedestrians}                                                                                                                                                                                                             & \multirow{23}{*}{} & \multicolumn{6}{c|}{\it streetLight}                                                                                                                                                                \\ \cline{1-7} \cline{9-14}
\multicolumn{1}{|c|}{$\rho_0$} & \multicolumn{3}{c|}{methods}                                                                                         & \multicolumn{1}{c|}{time}     & \multicolumn{1}{c|}{iters} & RE    &                    & \multicolumn{3}{c|}{methods}                                                                                         & \multicolumn{1}{c|}{time}   & \multicolumn{1}{c|}{iters} & RE    \\ \cline{1-7} \cline{9-14}
\multicolumn{1}{|l|}{\multirow{7}{*}{$0.01$}} & \multicolumn{3}{l|}{\sf t-tsvd}                                                                                            & \multicolumn{1}{l|}{131.1} & \multicolumn{1}{l|}{84}   & 9.3e-7 &                    & \multicolumn{3}{l|}{\sf t-tsvd}                                                                                            & \multicolumn{1}{l|}{206.0} & \multicolumn{1}{l|}{89}   & 9.9e-7 \\ \cline{2-7} \cline{9-14}
\multicolumn{1}{|l|}{}                      & \multicolumn{1}{l|}{\multirow{6}{*}{\sf r-TuRank}} & \multicolumn{1}{l|}{\multirow{2}{*}{$q=0$}} & \multicolumn{1}{l|}{$b=5$}  & \multicolumn{1}{l|}{70.9} & \multicolumn{1}{l|}{116}   & 9.3e-7 &                    & \multicolumn{1}{l|}{\multirow{6}{*}{\sf r-TuRank}} & \multicolumn{1}{l|}{\multirow{2}{*}{$q=0$}} & \multicolumn{1}{l|}{$b=5$}  & \multicolumn{1}{l|}{95.2} & \multicolumn{1}{l|}{106}   & 9.8e-7 \\ \cline{4-7} \cline{11-14}
\multicolumn{1}{|l|}{}                      & \multicolumn{1}{l|}{}                        & \multicolumn{1}{l|}{}                     & \multicolumn{1}{l|}{$b=15$} & \multicolumn{1}{l|}{81.8} & \multicolumn{1}{l|}{117}   & 8.7e-7 &                    & \multicolumn{1}{l|}{}                        & \multicolumn{1}{l|}{}                     & \multicolumn{1}{l|}{$b=15$} & \multicolumn{1}{l|}{113.3} & \multicolumn{1}{l|}{108}     & 8.9e-7
\\ \cline{3-7} \cline{10-14}
\multicolumn{1}{|l|}{}                      & \multicolumn{1}{l|}{}                        & \multicolumn{1}{l|}{\multirow{2}{*}{$q=1$}} & \multicolumn{1}{l|}{$b=5$}  & \multicolumn{1}{l|}{65.8} & \multicolumn{1}{l|}{101}   & 8.7e-7 &                    & \multicolumn{1}{l|}{}                        & \multicolumn{1}{l|}{\multirow{2}{*}{$q=1$}} & \multicolumn{1}{l|}{$b=5$}  & \multicolumn{1}{l|}{101.7} & \multicolumn{1}{l|}{99}   & 9.9e-7
\\ \cline{4-7} \cline{11-14}
\multicolumn{1}{|l|}{}                      & \multicolumn{1}{l|}{}                        & \multicolumn{1}{l|}{}                     & \multicolumn{1}{l|}{$b=15$} & \multicolumn{1}{l|}{85.0} & \multicolumn{1}{l|}{107}   & 9.1e-7 &                    & \multicolumn{1}{l|}{}                        & \multicolumn{1}{l|}{}                     & \multicolumn{1}{l|}{$b=15$} & \multicolumn{1}{l|}{123.9} & \multicolumn{1}{l|}{107}       & 9.8e-7
\\ \cline{3-7} \cline{10-14}
\multicolumn{1}{|l|}{}                      & \multicolumn{1}{l|}{}                        & \multicolumn{1}{l|}{\multirow{2}{*}{$q=2$}} & \multicolumn{1}{l|}{$b=5$}  & \multicolumn{1}{l|}{66.2} & \multicolumn{1}{l|}{98}   & 8.6e-7 &                    & \multicolumn{1}{l|}{}                        & \multicolumn{1}{l|}{\multirow{2}{*}{$q=2$}} & \multicolumn{1}{l|}{$b=5$}  & \multicolumn{1}{l|}{108.1} & \multicolumn{1}{l|}{101}    & 8.1e-7
\\ \cline{4-7} \cline{11-14}
\multicolumn{1}{|l|}{}                      & \multicolumn{1}{l|}{}                        & \multicolumn{1}{l|}{}                     & \multicolumn{1}{l|}{$b=15$} & \multicolumn{1}{l|}{93.1} & \multicolumn{1}{l|}{106}   & 8.9e-7 &                    & \multicolumn{1}{l|}{}                        & \multicolumn{1}{l|}{}                     & \multicolumn{1}{l|}{$b=15$} & \multicolumn{1}{l|}{138.5} & \multicolumn{1}{l|}{107}    & 9.8e-7 \\ \cline{1-7} \cline{9-14}
\multicolumn{1}{|l|}{\multirow{7}{*}{$0.05$}} & \multicolumn{3}{l|}{\sf t-tsvd}                                                                                            & \multicolumn{1}{l|}{103.9} & \multicolumn{1}{l|}{67}   & 9.4e-7 &                    & \multicolumn{1}{l|}{\sf t-tsvd}                    & \multicolumn{1}{l|}{}                     & \multicolumn{1}{l|}{}     & \multicolumn{1}{l|}{167.7} & \multicolumn{1}{l|}{73}   & 9.0e-7
 \\ \cline{2-7} \cline{9-14}
\multicolumn{1}{|l|}{}                      & \multicolumn{1}{l|}{\multirow{6}{*}{\sf r-TuRank}} & \multicolumn{1}{l|}{\multirow{2}{*}{$q=0$}} & \multicolumn{1}{l|}{$b=5$}  & \multicolumn{1}{l|}{60.0} & \multicolumn{1}{l|}{99}   & 9.3e-7 &                    & \multicolumn{1}{l|}{\multirow{6}{*}{\sf r-TuRank}} & \multicolumn{1}{l|}{\multirow{2}{*}{$q=0$}} & \multicolumn{1}{l|}{$b=5$}  & \multicolumn{1}{l|}{80.7} & \multicolumn{1}{l|}{91}   & 7.6e-7 \\ \cline{4-7} \cline{11-14}
\multicolumn{1}{|l|}{}                      & \multicolumn{1}{l|}{}                        & \multicolumn{1}{l|}{}                     & \multicolumn{1}{l|}{$b=15$} & \multicolumn{1}{l|}{70.2} & \multicolumn{1}{l|}{100}   & 9.5e-7 &                    & \multicolumn{1}{l|}{}                        & \multicolumn{1}{l|}{}                     & \multicolumn{1}{l|}{$b=15$} & \multicolumn{1}{l|}{97.9} & \multicolumn{1}{l|}{91}   & 9.1e-7
\\ \cline{3-7} \cline{10-14}
\multicolumn{1}{|l|}{}                      & \multicolumn{1}{l|}{}                        & \multicolumn{1}{l|}{\multirow{2}{*}{$q=1$}} & \multicolumn{1}{l|}{$b=5$}  & \multicolumn{1}{l|}{53.9} & \multicolumn{1}{l|}{84}   & 9.2e-7 &                    & \multicolumn{1}{l|}{}                        & \multicolumn{1}{l|}{\multirow{2}{*}{$q=1$}} & \multicolumn{1}{l|}{$b=5$}  & \multicolumn{1}{l|}{77.2} & \multicolumn{1}{l|}{83}   & 9.9e-7 \\ \cline{4-7} \cline{11-14}
\multicolumn{1}{|l|}{}                      & \multicolumn{1}{l|}{}                        & \multicolumn{1}{l|}{}                     & \multicolumn{1}{l|}{$b=15$} & \multicolumn{1}{l|}{71.4} & \multicolumn{1}{l|}{90}   & 9.6e-7 &                    & \multicolumn{1}{l|}{}                        & \multicolumn{1}{l|}{}                     & \multicolumn{1}{l|}{$b=15$} & \multicolumn{1}{l|}{109.1} & \multicolumn{1}{l|}{91}   & 9.8e-7
 \\ \cline{3-7} \cline{10-14}
\multicolumn{1}{|l|}{}                      & \multicolumn{1}{l|}{}                        & \multicolumn{1}{l|}{\multirow{2}{*}{$q=2$}} & \multicolumn{1}{l|}{$b=5$}  & \multicolumn{1}{l|}{54.6} & \multicolumn{1}{l|}{81}   & 9.9e-7 &                    & \multicolumn{1}{l|}{}                        & \multicolumn{1}{l|}{\multirow{2}{*}{$q=2$}} & \multicolumn{1}{l|}{$b=5$}  & \multicolumn{1}{l|}{85.7} & \multicolumn{1}{l|}{83}   & 8.7e-7 \\ \cline{4-7} \cline{11-14}
\multicolumn{1}{|l|}{}                      & \multicolumn{1}{l|}{}                        & \multicolumn{1}{l|}{}                     & \multicolumn{1}{l|}{$b=15$} & \multicolumn{1}{l|}{77.9} & \multicolumn{1}{l|}{90}   & 8.8e-7 &                    & \multicolumn{1}{l|}{}                        & \multicolumn{1}{l|}{}                     & \multicolumn{1}{l|}{$b=15$} & \multicolumn{1}{l|}{118.7} & \multicolumn{1}{l|}{91}   & 9.1e-7
 \\ \cline{1-7} \cline{9-14}
\multicolumn{1}{|l|}{\multirow{7}{*}{$0.1$}} & \multicolumn{3}{l|}{\sf t-tsvd}                                                                                            & \multicolumn{1}{l|}{94.3} & \multicolumn{1}{l|}{60}   & 9.2e-7 &                    & \multicolumn{1}{l|}{\sf t-tsvd}                    & \multicolumn{1}{l|}{}                     & \multicolumn{1}{l|}{}     & \multicolumn{1}{l|}{151.9} & \multicolumn{1}{l|}{65}   & 9.8e-7 \\ \cline{2-7} \cline{9-14}
\multicolumn{1}{|l|}{}                      & \multicolumn{1}{l|}{\multirow{6}{*}{\sf r-TuRank}} & \multicolumn{1}{l|}{\multirow{2}{*}{$q=0$}} & \multicolumn{1}{l|}{$b=5$}  & \multicolumn{1}{l|}{55.5} & \multicolumn{1}{l|}{92}   & 9.1e-7 &                    & \multicolumn{1}{l|}{\multirow{6}{*}{\sf r-TuRank}} & \multicolumn{1}{l|}{\multirow{2}{*}{$q=0$}} & \multicolumn{1}{l|}{$b=5$}  & \multicolumn{1}{l|}{74.1} & \multicolumn{1}{l|}{83}   & 8.2e-7 \\ \cline{4-7} \cline{11-14}
\multicolumn{1}{|l|}{}                      & \multicolumn{1}{l|}{}                        & \multicolumn{1}{l|}{}                     & \multicolumn{1}{l|}{$b=15$} & \multicolumn{1}{l|}{65.9} & \multicolumn{1}{l|}{93}   & 9.4e-7 &                    & \multicolumn{1}{l|}{}                        & \multicolumn{1}{l|}{}                     & \multicolumn{1}{l|}{$b=15$} & \multicolumn{1}{l|}{94.0} & \multicolumn{1}{l|}{84}   & 9.5e-7 \\ \cline{3-7} \cline{10-14}
\multicolumn{1}{|l|}{}                      & \multicolumn{1}{l|}{}                        & \multicolumn{1}{l|}{\multirow{2}{*}{$q=1$}} & \multicolumn{1}{l|}{$b=5$}  & \multicolumn{1}{l|}{48.6} & \multicolumn{1}{l|}{75}   & 9.9e-7 &                    & \multicolumn{1}{l|}{}                        & \multicolumn{1}{l|}{\multirow{2}{*}{$q=1$}} & \multicolumn{1}{l|}{$b=5$}  & \multicolumn{1}{l|}{76.7} & \multicolumn{1}{l|}{78}   & 9.7e-7 \\ \cline{4-7} \cline{11-14}
\multicolumn{1}{|l|}{}                      & \multicolumn{1}{l|}{}                        & \multicolumn{1}{l|}{}                     & \multicolumn{1}{l|}{$b=15$} & \multicolumn{1}{l|}{64.2} & \multicolumn{1}{l|}{82}   & 9.2e-7 &                    & \multicolumn{1}{l|}{}                        & \multicolumn{1}{l|}{}                     & \multicolumn{1}{l|}{$b=15$} & \multicolumn{1}{l|}{100.8} & \multicolumn{1}{l|}{85}   & 9.1e-7 \\ \cline{3-7} \cline{10-14}
\multicolumn{1}{|l|}{}                      & \multicolumn{1}{l|}{}                        & \multicolumn{1}{l|}{\multirow{2}{*}{$q=2$}} & \multicolumn{1}{l|}{$b=5$}  & \multicolumn{1}{l|}{51.5} & \multicolumn{1}{l|}{74}   & 8.8e-7 &                    & \multicolumn{1}{l|}{}                        & \multicolumn{1}{l|}{\multirow{2}{*}{$q=2$}} & \multicolumn{1}{l|}{$b=5$}  & \multicolumn{1}{l|}{77.3} & \multicolumn{1}{l|}{76}   & 9.8e-7 \\ \cline{4-7} \cline{11-14}
\multicolumn{1}{|l|}{}                      & \multicolumn{1}{l|}{}                        & \multicolumn{1}{l|}{}                     & \multicolumn{1}{l|}{$b=15$} & \multicolumn{1}{l|}{71.6} & \multicolumn{1}{l|}{83}   & 8.9e-7 &                    & \multicolumn{1}{l|}{}                        & \multicolumn{1}{l|}{}                     & \multicolumn{1}{l|}{$b=15$} & \multicolumn{1}{l|}{107.6} & \multicolumn{1}{l|}{83}   & 9.2e-7 \\ \hline
\end{tabular}}
\end{table}

\begin{figure}
\centering 
\includegraphics[width=0.9\textwidth,height=9cm]{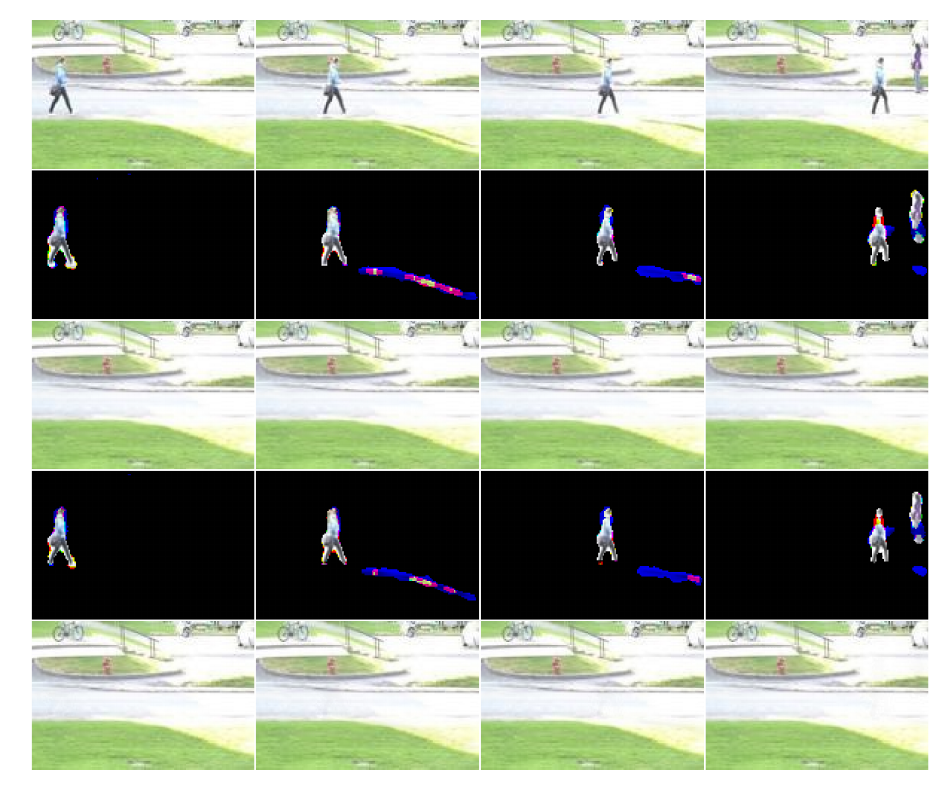}
\caption{\it   Background and  walking lady for {\sf pedestrians}. Herein, Rows 2 and 3 correspond to the {\sf r-TuRank}-based ADMM algorithm (with parameters b=5 and q=0), while Rows 4 and 5 correspond to the {\sf t-tsvd}-based algorithm. In the ADMM algorithm for both methods, the parameter $\mu_0=0.08$.}\label{fig:5.6}
 \end{figure}

 \begin{figure}[h]
\centering
\includegraphics[width=\textwidth]{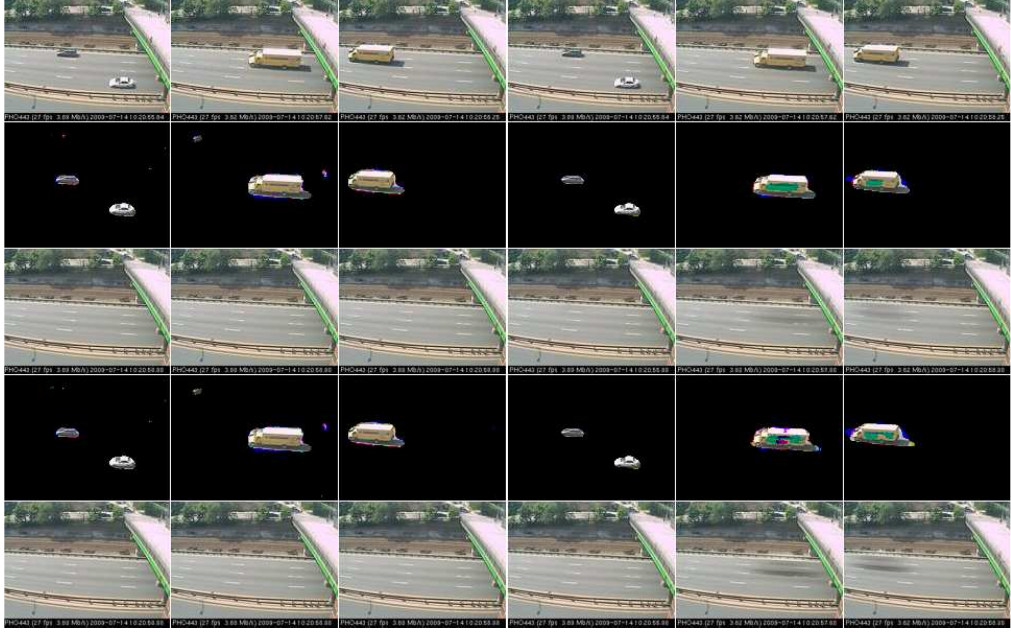}
\caption{\it  Background and moving cars  for {\sf streetLight}.  Columns 1 to 3 correspond to {\sf r-TuRank}-based ADMM (with parameters $b=5$ and $q=0$), while Columns 4 to 6 correspond to the {\sf t-tsvd}-based algorithm. Rows 2 and 3 correspond to $\mu_0=0.01\lambda$ and Rows 4 and 5 are  for  $\mu_0=0.08\lambda$. }\label{fig:5.7}
 \end{figure}

For different $\mu_0$, $b$ and $q$,   numerical results in  Table \ref{tab5.1} show that the {\sf r-TuRank}-based method outperforms the one based on {\sf t-tsvd} in terms of the running   time, although the former method needs more iterations.  For {\sf r-TuRank}, it should also be noted that for the fixed parameter $b$ and $\mu_0$, the increase of $q$ generally needs more running time but computes a more accurate approximation to {\sf t-tsvd}, and hence reduce the iterations of ADMM.

In Fig. \ref{fig:5.6} and Fig. \ref{fig:5.7}, we display the results for separating backgrounds from the foregrounds with a potentially poor parameter $q=0$. In Fig. \ref{fig:5.6}, a lady walking in the sun—along with her shadow—can be clearly detected by both methods.
In the video {\sf streetLight}, moving cars and vans are also separable by both methods; their separation results are visually comparable overall, though subtle differences emerge in details. In Fig. \ref{fig:5.7}, the foreground separated using {\sf r-TuRank} (see column 2) contains small blurry patches, while clearer features of the van's windows remain discernible. Additionally, in the restored background images via {\sf t-svd}, several patches of black shadows are visible on the road surface (see columns 5–6), whereas  these shadows are absent from the background images of columns 2–3.

\section{Conclusion}
In this paper, we present an adaptive randomized tubal rank-revealing algorithm of the data tensor ${\cal A}$ within the given threshold.
The algorithm extracts approximate basis for the range of the Fourier-domain tensor, and constructs an optimal approximation to the given tensor within the restricted range.
It avoids the large computational cost arising in the truncation of the full t-SVD.
Theoretical analysis shows
that the proposed algorithm can compute low tubal-rank approximation up to some
constants depending on the dimension of the data and Fourier-domain singular value
gap from optimal. Experimental results show its effectiveness and efficiency
in  image processing and background modeling problems.

\vskip 0.5cm

\noindent{\bf\small Data Availability} The data sets generated during and/or analyzed during the current study are
available from the corresponding author on reasonable request.

\section*{Declarations}
{\bf\small Conflict of Interest Statement} The authors declare no competing interests.

\noindent{\bf \small Ethical Approval}  Not applicable.

\bibliographystyle{unsrt}

\end{document}